\theoremstyle{plain}
\newtheorem{theorem}{Theorem}[section]
\newtheorem{lemma}[theorem]{Lemma}
\newtheorem{proposition}[theorem]{Proposition}
\theoremstyle{definition}
\newtheorem{remark}[theorem]{Remark}
\newcommand{\supp}{\operatorname{supp}}
\newcommand{\bbN}{\mathbb{N}}
\newcommand{\aps}[1]{\vert #1 \vert}
\newcommand{\floor}[1]{\lfloor #1 \rfloor}
\newcommand{\FLOOR}[1]{\left\lfloor #1 \right\rfloor}
\newcommand{\OBL}[1]{\left( #1 \right)}
\newcommand{\FJADEF}[3]{{#1}:{#2}\to{#3}}
\numberwithin{equation}{section}
\definecolor{Maroon}{RGB}{140,10,0}
\title{On a variant of Flory model}
\author[T.\ Do\v{s}li\'{c}]{Tomislav\ Do\v{s}li\'{c}}
\address[Tomislav\ Do\v{s}li\'{c}]{Department of Mathematics\\
	Faculty of Civil Engineering\\
	University of Zagreb\\ 
	Zagreb\\ 
	Croatia \\ and
Faculty of Information Studies \\
Novo Mesto \\
Slovenia}
\email{tomislav.doslic@grad.unizg.hr}
\author[M.\ Puljiz]{Mate\ Puljiz}
\address[Mate Puljiz]{Department of Applied Mathematics\\
	Faculty of Electrical Engineering and Computing\\
	University of Zagreb\\ 
 Zagreb\\ 
	Croatia}
\email{mate.puljiz@fer.hr}
\author[S.\ \v{S}ebek]{Stjepan\ \v{S}ebek}
\address[Stjepan\ \v{S}ebek]{Department of Applied Mathematics\\
	Faculty of Electrical Engineering and Computing\\
	University of Zagreb\\ 
 Zagreb\\ 
	Croatia}
\email{stjepan.sebek@fer.hr}
\author[J.\ \v{Z}ubrini\'{c}]{Josip\ \v{Z}ubrini\'{c}}
\address[Josip\ \v{Z}ubrini\'{c}]{Department of Applied Mathematics\\
	Faculty of Electrical Engineering and Computing\\
	University of Zagreb\\ 
 Zagreb\\ 
	Croatia}
\email{josip.zubrinic@fer.hr}
\subjclass[2020]
	{05B40, 
	05A15,  
	05A16,  
	05A19,  
	00A67}  
\keywords{Maximal packing, generating functions, bijective proof, settlement model}
\begin{document}
\allowdisplaybreaks[4]

\begin{abstract}

We consider a one-dimensional variant of a recently introduced settlement
planning problem in which houses can be built on finite portions of the 
rectangular integer lattice subject to certain requirements on the amount of
insolation they receive. In our model, each house occupies a unit square
on a $1 \times n$ strip, with the restriction that at least one of the
neighboring squares must be free. We are interested mostly in situations
in which no further building is possible, i.e.\ in maximal configurations
of houses in the strip. We reinterpret the problem as a problem of 
restricted packing of vertices in a path graph and then apply the transfer
matrix method in order to compute the bivariate generating
functions for the sequences enumerating all maximal configurations of a
given length with respect to the number of houses. This allows us to 
determine the asymptotic behavior of the enumerating sequences and to 
compute some interesting statistics. Along the way, we establish close
connections between our maximal configurations and several other types
of combinatorial objects, including restricted permutations and walks on
certain small oriented graphs. In all cases we provide combinatorial proofs.
We then generalize our results in several directions by considering
multi-story houses, by varying the insolation restrictions, and, finally,
by considering strips of width 2 and 3. At the end we comment on several
possible directions of future research.
\end{abstract}

\maketitle

%
%
%
%

\section{Introduction}

Many problems of practical importance can be formulated in terms of packings. 
Intuitively, a packing is any arrangement of non-overlapping copies from a
(usually finite) collection of objects ${\mathcal P}$ within a prescribed
part of a large(r) set ${\mathcal E}$. It often happens that both the large 
set and the objects being packed can be naturally endowed with the same type
of discrete structure. In such cases, general packings can be successfully
modeled by packings of graphs. If, for example, ${\mathcal E}$ can be 
represented by a graph $G$ and elements of ${\mathcal P}$ by graphs $H_1, 
\ldots , H_k$, then a ${\mathcal P}$-packing of $G$ is a collection of
vertex-disjoint subgraphs of $G$ such that each of them is isomorphic to
some $H_i$, $i = 1, \ldots , k$. When ${\mathcal P}$ consists of a single
element $H$, one simply speaks of $H$-packings of $G$.

Clearly, one would expect that the difficulty of packing problems increases
with the increase of the complexity of $H$. Indeed, the simplest non-trivial
case, $H = K_2$, is well researched, while the results on larger $H$ are 
much less abundant. This does not prevent graph packings from being a very
versatile tool; even the simplest case of packing dimers ($H = K_2$) into
a larger graph is one of most commonly used models in several areas of
physics and chemistry. It suffices to mention the Ising model of magnetic
materials and the concept of the topological resonant energy, crucial for
stability of conjugated molecules. Both models employ perfect matchings,
i.e.\ packings of dimers covering all vertices of the underlying graph.
For a very brief introduction to both topics we refer the reader to
\cite[\S 8.7]{LP86} and references therein. For some recent results on packing
larger $H$ see, for example, \cite{doslicASR,doslicPSF}.

In this paper we look at a problem which can be modeled by packing even 
simpler graphs, the copies of $K_1$, into finite portions of regular
rectangular lattice. Without further restrictions this problem would be
trivial, but our problem imposes restrictions that arise quite naturally
in the context of settlement growth and planning. It turns out that with 
those restrictions even packing the simplest possible graphs, $K_1$, into
finite pieces of the square lattice gives rise to very interesting behavior
and exhibits often surprising relations with several other classes of
combinatorial objects. (Another non-trivial problem which can be reduced
to restricted packings of trivial graph $K_1$ is the problem of finding
a large independent set in a given graph.)

All the aforementioned packing problems can be studied in a static or a
dynamic variant. In the present work we focus only on the static models
with the aim to enumerate all configurations that arise in such models
and that satisfy certain additional requirements. The study of the dynamic
variant of the same models aims to find the distribution of configurations
constructed by a random process in which the pieces from $\mathcal{P}$
arrive sequentially and are placed randomly onto available locations in
${\mathcal E}$ until saturation. These kinds of models that evolve over
time are extensively studied, see \cite[\S 7]{Krapivsky_et_al} for
introduction, and \cite{Krapiv20, Krapiv22, Krapiv19, KraRedBird} for some
recent results in this direction. The efforts to extend our results in this
direction are currently underway.

\medskip

In \cite{PSZ-21} three of the present authors introduced the following
settlement model. A rectangular $m\times n$ tract of land, with sides
oriented north-south and east-west, is divided into $mn$ unit squares,
see Figure \ref{fig:tract_of_land}. Each square lot can be either occupied
(by a house) or left vacant. An arrangement of houses on such a tract of
land is called a \emph{configuration} and can be encoded as an $m\times n$
matrix $C$, where $c_{i,j}=1$ if the lot $(i,j)$ is occupied, and
$c_{i,j}=0$ otherwise.

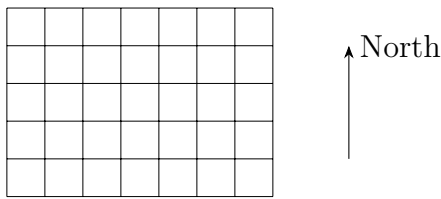
\begin{figure}[h]
	\centering
	\begin{tikzpicture}[scale = 0.5]
	\draw[step=1cm,black,very thin] (0, 0) grid (7,5);
	\draw [-Stealth] (9,1) -- (9,4);
	\node[anchor=west] at (9,4) {North};
	\end{tikzpicture}
	\caption{An example of a tract of land ($m = 5$, $n = 7$).}
	\label{fig:tract_of_land}
\end{figure}

A configuration $C$ is \emph{permissible} if no occupied lot $(i,j)$ borders simultaneously with three other occupied lots to its east, south and west --- in other words --- the house on the position $(i,j)$ receives the sunlight during at least one part of the day (be it in the morning from the east, or during the midday from the south, or in the evening from the west).

As is the case with other packings, one is, naturally, interested in
\emph{large} permissible configurations, since the small ones tend to be
trivial and easy to construct. One way of being large is to have the largest
possible number of occupied lots, hence the largest possible size.
We call such configurations \emph{maximum} configurations (in \cite{PSZ-21,PSZ-21-2} these were called \emph{efficient}).
Another, more interesting, way of being large is in the sense of set inclusion.
A permissible configuration $C$ is \emph{maximal} if no additional houses
can be added to the configuration without rendering it impermissible,
see Figure \ref{fig:examples}. Unlike the maximum configurations, the 
maximal ones usually come in a range of different sizes, and it is of
interest to know the exact distribution of sizes. Moreover, here also the
smallest such configurations are interesting (in \cite{PSZ-21,PSZ-21-2} these were called \emph{inefficient}), as they describe either the
worst possible outcome if we are interested in packing as many elements as
possible, or the best possible outcome if we are trying to satisfy certain
needs by the smallest possible number of packed objects. 

\begin{figure}
	\begin{subfigure}{0.3\textwidth}\centering
		\begin{tikzpicture}[scale = 0.5]
		\draw[step=1cm,black,very thin] (0, 0) grid (4,5);
		\fill[blue!40!white] (0,1) rectangle (1,2);
		\fill[blue!40!white] (0,3) rectangle (1,4);
		\fill[blue!40!white] (0,4) rectangle (1,5);
		\fill[blue!40!white] (1,1) rectangle (2,2);
		\fill[blue!40!white] (1,2) rectangle (2,3);
		\fill[blue!40!white] (1,3) rectangle (2,4);
		\node[] at (1.5,3.5) {x};
		\fill[blue!40!white] (2,0) rectangle (3,1);
		\fill[blue!40!white] (2,1) rectangle (3,2);
		\fill[blue!40!white] (2,2) rectangle (3,3);
		\node[] at (2.5,2.5) {x};
		\fill[blue!40!white] (2,3) rectangle (3,4);
		\fill[blue!40!white] (3,0) rectangle (4,1);
		\fill[blue!40!white] (3,2) rectangle (4,3);
		\draw[step=1cm,black,very thin] (0, 0) grid (4,5);
		\end{tikzpicture}
		\caption{Impermissible}
	\end{subfigure}
	\begin{subfigure}{0.3\textwidth}\centering
		\begin{tikzpicture}[scale = 0.5]
		\draw[step=1cm,black,very thin] (0, 0) grid (4,5);
		\fill[blue!40!white] (0,1) rectangle (1,2);
		\fill[blue!40!white] (0,3) rectangle (1,4);
		\fill[blue!40!white] (0,4) rectangle (1,5);
		\fill[blue!40!white] (1,1) rectangle (2,2);
		\fill[blue!40!white] (1,3) rectangle (2,4);
		\fill[blue!40!white] (2,0) rectangle (3,1);
		\fill[blue!40!white] (2,1) rectangle (3,2);
		\fill[blue!40!white] (2,2) rectangle (3,3);
		\fill[blue!40!white] (2,3) rectangle (3,4);
		\fill[blue!40!white] (3,0) rectangle (4,1);
		\fill[blue!40!white] (3,2) rectangle (4,3);
		\draw[step=1cm,black,very thin] (0, 0) grid (4,5);
		\end{tikzpicture}
		\caption{Permissible}
	\end{subfigure}
	\begin{subfigure}{0.3\textwidth}\centering
		\begin{tikzpicture}[scale = 0.5]
		\draw[step=1cm,black,very thin] (0, 0) grid (4,5);
		\fill[blue!40!white] (0,0) rectangle (1,1);
		\fill[blue!40!white] (0,1) rectangle (1,2);
		\fill[blue!40!white] (0,2) rectangle (1,3);
		\fill[blue!40!white] (0,3) rectangle (1,4);
		\fill[blue!40!white] (0,4) rectangle (1,5);
		\fill[blue!40!white] (1,1) rectangle (2,2);
		\fill[blue!40!white] (1,3) rectangle (2,4);
		\fill[blue!40!white] (2,0) rectangle (3,1);
		\fill[blue!40!white] (2,1) rectangle (3,2);
		\fill[blue!40!white] (2,2) rectangle (3,3);
		\fill[blue!40!white] (2,3) rectangle (3,4);
		\fill[blue!40!white] (2,4) rectangle (3,5);
		\fill[blue!40!white] (3,0) rectangle (4,1);
		\fill[blue!40!white] (3,2) rectangle (4,3);
		\fill[blue!40!white] (3,4) rectangle (4,5);
		\draw[step=1cm,black,very thin] (0, 0) grid (4,5);
		\end{tikzpicture}
		\caption{Maximal}
	\end{subfigure}
	\caption{Examples of impermissible, permissible and maximal configuration on a $5 \times 4$ tract of land. `x' marks a house blocked from the sunlight.}\label{fig:examples}
\end{figure}
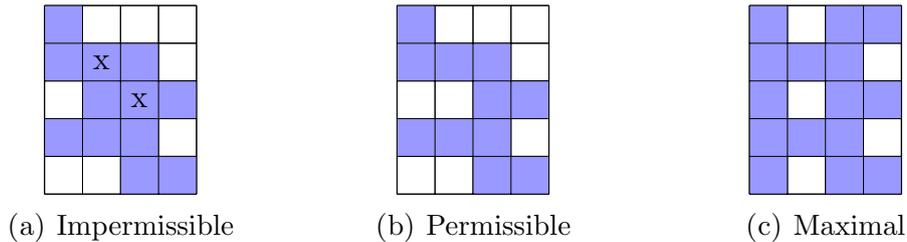

The authors in \cite{PSZ-21} found maximal configurations with the lowest
\emph{occupancy} (the number of houses in a configuration) among all the
maximal configurations on an $m \times n$ grid. They also obtained bounds
on the highest occupancy possible. A natural next step would be
to find the total number of all maximal
configurations for a given grid and to refine the enumeration by the number
of occupied lots. The problem seems to be too hard in the general 
$m \times n$ case. Hence, in this paper we consider its restriction to
the one-dimensional case $1 \times n$ to which we can apply the transfer
matrix method. This method allows us to obtain a complete solution
of the one-dimensional case by computing and analyzing the bivariate
generating functions for the corresponding enumerating sequences.
Our results could be, in principle, generalized to larger grids; indeed,
for grids of size $2\times n$ and $3\times n$
we derive (bivariate) generating functions counting the number of maximal
configurations by the same transfer matrix method used in the $1 \times n$
case. However, the calculations get increasingly infeasible for larger
strips and we decided not to pursue it beyond $m = 3$. 

The transfer matrix method, see \cite[\S 4.7]{Stanley} or
\cite[\S V]{FlajoletSedgewick}, and also \cite[\S 2--4]{SymbDynCoding},
is a well known method for counting words of a regular language.
Applicability of this method to our setting relies on the fact that
permissibility as well as maximality of a configuration can be verified
by inspecting only finite size patches of a given configuration. The
limitation, however, is that the method deals with, essentially, one
dimensional objects, so we first consider a modification of the settlement
model on the $1\times n$ grid. This modification, defined later in text,
we call the Riviera model. It turns out that the Riviera model can be seen
as a variant of Flory polymer model \cite{Flory} which is in turn related
to Page-R\'enyi parking process \cite{GerinPRparkingHAL,Page}.

Surprisingly, the maximal configurations of the Riviera model turn out to be related to a certain kind of restricted permutations introduced in \cite{Baltic}. We were able to construct an explicit bijection translating between the two. Also, we construct another bijection connecting the Riviera model with the closed walks on $P_3$ graph with an added loop. 

The paper is organized as follows. In Section \ref{sec:Riviera} we introduce
the Riviera model. We find the bivariate generating function counting the
number of maximal configurations of length $n$ with precisely $k$ houses
($n, k \in \mathbb{N}$). Furthermore, we relate the Riviera model with some
other combinatorial objects that were already studied in the literature. In
Section \ref{sec:multi-story} we generalize the Riviera model introduced in
Section \ref{sec:Riviera} in the sense that we allow houses to have multiple
stories. Additionally, we comment on the close relation between the Riviera
model and the famous Flory model. In Section \ref{sec:mxnForSmallm} we deal
with configurations on $m \times n$ grids with $m = 2$ and $m = 3$.
Finally, in Section \ref{sec:concluding} we recapitulate our findings and
indicate several
possible directions of future research. Some lengthy formulas are relegated
to Appendix in order to improve legibility.

\medskip

A note on notation: Whenever a non-integer decimal is encountered in the text, its value should be interpreted as an approximation of the true value rounded to six decimal places. By $a_n\sim b_n$ (as $n\to\infty$) we mean $\lim_{n\to\infty} \frac{a_n}{b_n} =1$. In several places in the text we use the same name for different functions. Most prominently, the generating function for almost every model is denoted as $F(x)$, $F(x,y)$, or $F(x,y,z)$. This should not lead to any confusion, as it is always clear from the context to which function the text refers.

\section{Riviera model}\label{sec:Riviera}
We introduce a $1D$-modification of the above settlement planning model which ignores the possibility of obtaining sunlight from the south, but instead retains only the constraints pertaining to the east and west directions. As this is a model on a strip of land, it resembles a Mediterranean settlement along the coast (riviera), hence the name. The configuration of built houses is represented with a row vector\footnote{We write configurations as strings of $0$'s and $1$'s, and we refer to any consecutive sequence of letters in a configuration as a substring or a (sub)word in that configuration.} $C = (c_k)$, where $c_k=1$ if the lot $k$ is occupied and $c_k=0$ otherwise. Similarly as before, a configuration is said to be permissible if every occupied lot has at least one neighboring lot unoccupied (except maybe for the first and the last lot which receive sunlight from the boundary) so that it is not blocked from the sunlight. Among permissible configurations, we are interested in the maximal ones, namely configurations such that any addition of a house on an unoccupied lot would result in an impermissible configuration.

The properties of maximality and permissibility are locally verifiable in a sense that, if one wants to check whether a state of a certain lot (occupied or unoccupied) has caused the configuration to be impermissible or not maximal, one only needs to check the situation on the lots in a certain finite radius of the observed lot, where that radius is uniform for each lot on the tract of land. 

More precisely, to verify that a configuration is permissible, one needs to check that no occupied lot has both of its neighboring lots occupied as well. This can be done by inspecting all the length $3$ substrings of a configuration. And to verify that a configuration is maximal, one needs to check, additionally, that no unoccupied lots can be built on. This can be done by observing the eastern two and western two lots around the unoccupied lot, i.e.\ by inspecting all the length $5$ substrings of a padded (see Remark \ref{rem:padding}) configuration.

This property of local verifiability of a constraint which describes the model is a recurring motif throughout our analysis of related models in this paper.

\begin{lemma}
\label{lemma_forbidden_words}
Let $n\in \mathbb{N}$. A configuration $C \in \{0,1\}^n$ in the Riviera model is maximal if and only if, when padded with zeros, it does not contain any of the following (decorated) substrings: 
\begin{equation}
    1\underline{1}1, \quad 0\underline{0}0, \quad 01\underline{0}0, \quad 0\underline{0}10. 
\end{equation}
\end{lemma}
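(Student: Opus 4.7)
The plan is a direct case analysis establishing the equivalence between maximality of $C$ and the absence of the four decorated patterns in its zero-padded version. First I would unpack the semantics of the underline: in $1\underline{1}1$ it flags a house blocked from sunlight on both sides (an obstruction to permissibility), while in the three remaining patterns it flags an unoccupied lot on which a house can still be legally placed (an obstruction to maximality). With this reading the if-and-only-if statement acquires a transparent combinatorial meaning.

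For the \emph{forward} direction I would argue by contraposition. The presence of $1\underline{1}1$ contradicts permissibility directly. For each of $0\underline{0}0$, $01\underline{0}0$, $0\underline{0}10$, I would exhibit a concrete legal extension of $C$ obtained by flipping the underlined $0$ to a $1$ and verifying, by brief local inspection of the resulting window, that no existing or newly created house is blocked; this contradicts maximality. The pattern $0\underline{0}0$ becomes $010$ after the flip; the pattern $01\underline{0}0$ becomes $0110$, where the left $1$ keeps its $0$-neighbor on the left and the right $1$ keeps its $0$-neighbor on the right; the case $0\underline{0}10$ is symmetric.

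The \emph{backward} direction is the substantive one, and I expect the main effort to lie in a clean case split. Assuming the padded configuration avoids all four patterns, permissibility is immediate from the absence of $1\underline{1}1$. For maximality, I would suppose towards contradiction that some unoccupied position $k$ admits a house while preserving permissibility, and translate this into three local conditions on $c_{k-2}, c_{k-1}, c_{k+1}, c_{k+2}$: (i) at least one of $c_{k-1}, c_{k+1}$ is $0$; (ii) $c_{k-1}=1$ forces $c_{k-2}=0$; (iii) $c_{k+1}=1$ forces $c_{k+2}=0$. Splitting on the value of $c_{k-1}$ would then force the occurrence of exactly one of $0\underline{0}0$, $0\underline{0}10$, or $01\underline{0}0$ around position $k$, contradicting the hypothesis.

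The main obstacle I anticipate is organizational rather than technical, namely making the case split exhaustive and keeping the endpoints of the strip uniform with the interior. The latter is precisely what the zero-padding takes care of: the lots immediately outside the strip are treated as permanently unoccupied, consistently with the physical intuition that the boundary never blocks sunlight, and this lets the four relative positions around $k$ be accessed without ad hoc exceptions at either end.
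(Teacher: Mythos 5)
Your proposal is correct and follows essentially the same route as the paper: the three local conditions (i)--(iii) you extract for ``a house can legally be added at $k$'' are exactly the conjunction of disjunctions the paper writes down, and your case split on $c_{k-1}$ is the same Boolean expansion the paper performs via distributivity to arrive at the forbidden words $0\underline{0}0$, $0\underline{0}10$, $01\underline{0}0$, with $1\underline{1}1$ handling permissibility. The only presentational difference is that the paper phrases the whole argument as a single chain of equivalences (via the contrapositive), whereas you prove the two directions separately; the combinatorial content is identical.
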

\begin{remark}\label{rem:padding}
Throughout the paper, unless stated otherwise, we assume that the lots on the boundary can get sunlight from the boundary side, i.e.\ we assume that our configurations are padded with zeros. When inspecting whether a configuration $c_1\dots c_n$ contains a decorated word $d_1\dots \underline{d_k}\dots d_l$, we check against a padded word $\dots000c_1\dots c_n000\dots$ but with the underlined letter of the decorated word aligned with $c_i$ for $i=1,\dots, n$. This is necessary as e.g.\ the configuration $10011$ would otherwise be considered allowed (not containing any of the forbidden substrings), although it is not maximal.
\end{remark}

\begin{proof}[Proof of Lemma \ref{lemma_forbidden_words}]
	A configuration $C$ is maximal if and only if it is permissible and for each $k=1,\dots,n$ one has
	\begin{equation*}
		c_k =0 \Longrightarrow  \begin{array}{cc}
		(c_{k-1}=1 \mbox{ and } c_{k+1}=1) \mbox{ or }  (c_{k-1}=1 \mbox{ and } c_{k-2}=1)  \\
		\mbox{ or } (c_{k+1}=1 \mbox{ and } c_{k+2}=1).
		\end{array}
	\end{equation*}
	The contrapositive of the above implication reads
	\begin{equation}\label{eq:condition1}
	\begin{array}{cc}
	&  (c_{k-1}=0 \mbox{ or } c_{k+1}=0) \mbox{ and }  (c_{k-1}=0 \mbox{ or } c_{k-2}=0)  \\
	& \mbox{ and } (c_{k+1}=0 \mbox{ or } c_{k+2}=0) 
	\end{array}\Longrightarrow c_k =1.
	\end{equation}
	This illustrates the fact that, if there is no danger of losing permissibility by setting $c_k =1$, then one should put $c_k = 1$ (with the agenda of obtaining maximality).
	
	By using the distributive property and after removing redundant terms the left hand side of \eqref{eq:condition1} can be rewritten as
	\begin{equation}\label{eq:condition}
	\begin{array}{cc}
	(c_{k-1}=0 \text{ and } c_{k+1}=0) \text{ or } (c_{k-1}=0 \text{ and } c_{k+2}=0) \\
	\text{ or } (c_{k+1}=0 \text{ and } c_{k-2}=0)
	\end{array}\Longrightarrow c_k =1.
	\end{equation}
	From here we can compile the list of forbidden words. We include $1\underline{1}1$ to ensure permissibility, and \eqref{eq:condition} gives us five more words $0\underline{0}0$, $0\underline{0}{*}0$, $0{*}\underline{0}0$ where $*$ stands for any symbol. As the words $0\underline{0}00$ and $00\underline{0}0$ are already excluded by $0\underline{0}0$, the set of forbidden (decorated) words is
	$$\{1\underline{1}1, 0\underline{0}0, 0\underline{0}10, 01\underline{0}0\}.$$

\end{proof}

\begin{remark}
	An alternative approach for constructing the set of forbidden words, once we know that it suffices checking substrings of length $5$, is to consider all $2^5$ binary words of length $5$ and, out of those, take words that do not appear in any finite maximal configuration to be the forbidden set of words. This approach is more amenable for use in a computer algorithm and we will make use of it later on.
	
	Using this approach one would come up with the set of forbidden length $5$ words
	$$\{{*}1\underline{1}1{*},\quad {*}0\underline{0}0{*},\quad 01\underline{0}00,01\underline{0}01,\quad
	00\underline{0}10,10\underline{0}10\},$$
	which again can be reduced to $\{1\underline{1}1, 0\underline{0}0, 0\underline{0}10, 01\underline{0}0\}$.
As before, $*$ stands for any symbol, and e.g.\ the string ${*}1\underline{1}1{*}$ actually accounts for $4$ different (decorated) words.
\end{remark}
	
\subsection{Counting maximal configurations}
Upon examining the Lemma \ref{lemma_forbidden_words}, one sees that it is possible to encode each maximal configuration as a walk on the directed graph in Figure \ref{fig:transfer_graph_Riviera} whose vertices represent all allowed substrings of length $3$ and the directed edges represent allowed transitions (namely, transitions which comply with the condition stated in Lemma \ref{lemma_forbidden_words}), see \cite[\S 2.3]{SymbDynCoding} for more details on this construction. There is an edge from the word $u_1u_2u_3$ to $v_1v_2v_3$ if they \emph{overlap progressively}, meaning that $u_2u_3=v_1v_2$, and if the word $u_1u_2u_3v_3=u_1v_1v_2v_3$ is not forbidden. 
(Our graph is therefore a subgraph of the $3$-dimensional de Bruijn graph over symbols $\{0,1\}$. Not all edges are present, since the transitions that correspond to forbidden $4$ letter words must be deleted.) Thus, a transition simply represents the addition of a new lot to the right of the configuration, state of which is described with the last letter of the string of the target node. 

	\begin{figure}[h]
		\begin{tikzpicture}[node distance=5em, nodeStyle/.style={draw, circle, minimum size=2.5em}]
		\node (A) [nodeStyle] {100};
		\node (B) [right of = A, nodeStyle] {001};
		\node (C) [right of = B, nodeStyle, fill=blue!20] {011};
		\node (D) [right of = C, nodeStyle, fill=blue!20] {110};
		\node (E) [right of = D, nodeStyle, fill=blue!20] {101};
		\node (F) [right of = E, nodeStyle] {010};
		\draw[-Stealth,above] (A) edge[bend left] (B);
		\draw[-Stealth,above] (B) edge[bend left] (C);
		\draw[-Stealth,above] (F) edge[bend left] (E);
		\draw[-Stealth,above] (D) edge[bend left] (E);
		\draw[-Stealth,above] (D) edge[bend left] (A);
		\draw[-Stealth,above] (C) edge[bend left] (D);
		\draw[-Stealth,above] (E) edge[bend left] (F);
		\draw[-Stealth,above] (E) edge[bend left] (C);
		\end{tikzpicture}
		 \caption{Transfer digraph $\mathcal{G_R}$ for the Riviera model. For example, a maximal configuration 110011010110 is represented with a walk: 	\\	 $ 110\to 100 \to 001 \to 011 \to 110 \to 101 \to 010 \to 101 \to 011 \to 110 $. Each walk must start and end at shaded nodes.}\label{fig:transfer_graph_Riviera}
	\end{figure}
Depending on the choice of boundary conditions, we are left with a constrained subset of vertices which may serve as a starting point or an ending point of the walk which encodes the configuration. The default boundary condition states that the first and the last lot in a configuration obtain sunlight from the boundary. One can easily check that in this situation, the only allowed starting and ending vertices are $110$, $101$, $011$ (Otherwise, one would not obtain a maximal configuration from the walk). An alternative to this boundary condition is the setting where the first and the last lot do not obtain sunlight from the boundary. In this situation, the allowed starting vertices are $100$, $011$, $101$, $010$, while the allowed ending vertices are $001$, $110$, $101$, $010$. A third option which one may consider is the periodic boundary condition in which case, one simply searches for closed walks.

One can count the number of walks of fixed length $n$ on the graph in Figure \ref{fig:transfer_graph_Riviera} by examining the powers of the transfer matrix $A$ associated with that graph: 
\begin{equation}
    \kbordermatrix{
		& \text{100} & \text{001} & \text{011} & \text{110} & \text{101} & \text{010}\\
		\text{100} & 0 & 1 & 0 & 0 & 0 & 0\\
		\text{001} & 0 & 0 & 1 & 0 & 0 & 0\\
		\text{011} & 0 & 0 & 0 & 1 & 0 & 0\\
		\text{110} & 1 & 0 & 0 & 0 & 1 & 0\\
		\text{101} & 0 & 0 & 1 & 0 & 0 & 1\\
		\text{010} & 0 & 0 & 0 & 0 & 1 & 0\\
	} =: A.
\end{equation}
Namely, we have: 
\begin{equation}
\begin{array}{c}
     \# \mbox{ of walks of length $n$}  \\
    \mbox{starting with the node $i$ and ending in the node $j$ }
\end{array}
      = [A^n]_{i,j}.
\end{equation}
This gives us a neat way of counting the maximal configurations since 
\begin{equation}
\begin{array}{c}
     \# \mbox{ of walks of length $n$}  \\
     \mbox{starting with $110$ or $101$ or $011$} \\ 
     \mbox{and ending with $110$ or $101$ or $011$}
\end{array} = \begin{array}{c}
	\# \mbox{ of maximal configurations} \\
	\mbox{of length $n+3$}.
	\end{array}
\end{equation}
This is due to the fact that each transition adds another lot to the configuration, which at the beginning of the walk had $3$ lots.

By introducing the vector $a = (0,0,1,1,1,0)^T$, we have the following: 
\begin{equation}
\label{sequence_an}
    \# \mbox{ of maximal configurations of length $n$} =: a_n = a^T \cdot	A^{n-3}\cdot a, \quad n\ge 3.
\end{equation}

\begin{remark}\label{rem:PF-Riviera}
 A straightforward asymptotic formula for $a_n$ can be obtained by calculating the Perron-Frobenius eigenvalue $\lambda$ of the matrix $A$, namely the largest real eigenvalue of $A$. There exist a constant $C$ such that
 \begin{equation}
     a_n \sim C \, \lambda^n, \text{ as } n \to \infty.
 \end{equation}
 This is due to the fact that vector $a$ possesses a nontrivial component in the direction of the Perron-Frobenius eigenvector. 
 The Perron-Frobenius eigenvalue of our matrix $A$ is equal to $\lambda = \frac{1}{w} = 1.401268$. The numerical value of the constant $C= \frac{\lambda^6+\lambda^5+\lambda^3-\lambda}{2\lambda^4+3\lambda^3+4\lambda^2-6} = 0.803796$ can be obtained from the generating function \eqref{eq:GF4offA080013} using Theorem \ref{tm:asymp} below.
\end{remark}
\begin{remark}\label{rem:differentBoundary}
	 If one would study the alternative boundary conditions of no sun from the boundary, instead of  \eqref{sequence_an}, one would obtain: 
	\begin{equation}
	\label{sequence_bn}
		\begin{array}{cc}
		   \# \mbox{ of maximal configurations of length $n$} \\
		   \mbox{ with no-sun boundary condition }
		\end{array}   =: b_n = b^T \cdot	A^{n-3}\cdot d, \quad n\ge 3,
	\end{equation}
	where $b = (1,0,1,0,1,1)$, $d = (0,1,0,1,1,1)$. This sequence appears on the OEIS \cite{oeis} under the number \href{https://oeis.org/A253412}{A253412}.

	 In the case of periodic boundary conditions, there is a clear $1$ to $1$ correspondence between the maximal configurations of length $n$ and closed walks on the graph in Figure \ref{fig:transfer_graph_Riviera}. Thus, we have: 
	 \begin{equation}
	\label{sequence_dn}
	 \begin{array}{cc}
	      \# \mbox{ of maximal configurations of length $n$}  \\
	      \mbox{ with periodic boundary conditions } 
	 \end{array}   =: d_n = \mathop\text{tr}(A^n) , \quad n\ge 1.
	\end{equation}
	This sequence appears on the OEIS under the number \href{https://oeis.org/A253413}{A253413}.
\end{remark}
	
\subsubsection{Generating functions for the Riviera model}\label{subsec:gen_fun_for_Riviera}
	From the structure of the sequence $(a_n)$, one can easily calculate its generating function $f=f(y)$ by calculating the resolvent $(I-yA)^{-1}$, where $y$ is a formal variable. This is a somewhat standard calculation for which we explicitly need to determine the first $3$ values of $(a_n)$. We have:
	\begin{align}\label{eq:GF4offA080013}\nonumber
	f(y)
	& = 1+y+y^2+ \sum_{n = 3}^{\infty} a^T \cdot
	A^{n-3}\cdot
	a \cdot y^n \\\nonumber
	& = 1+y+y^2+ a^T\cdot \OBL{\sum_{n = 0}^{\infty} (yA)^n} \cdot a  \cdot y^3 \\\nonumber
	& = 1+y+y^2+ a^T \cdot (I - yA)^{-1} \cdot a \cdot y^3\\
	& = \dfrac{1+y+y^3-y^5}{1-y^2-y^3-y^4+y^6}.
	\end{align}
	For inverting matrix functions, we have used the software for symbolic calculation, Maxima \cite{maxima}.

\begin{remark}
 The generating function $f$ encodes the infinite sequence $(a_n)$ into a simple rational function. Immediately we deduce that the sequence $(a_n)$ satisfies the following $6$\textsuperscript{th} order linear recurrence relation: 
 \begin{equation}
     \begin{array}{cc}
          a_n = a_{n-2} + a_{n-3} + a_{n-4} - a_{n-6}, \quad n \geq 7,  \\
          a_1 = 1, \quad a_2 = 1, \quad a_3 = 3, \quad a_4 = 3, \quad a_5 = 4, \quad a_6 = 6.
     \end{array}
 \end{equation}
 The sequence $(a_n)$ cannot easily be represented with an explicit formula as it would involve the roots of the polynomial $p(y) = y^6-y^4-y^3-y^2+1$.
\end{remark}	

The information on the number of maximal configurations of fixed length is already useful, but our aim is to determine the precise number of maximal configurations of length $n \in \mathbb{N}$ with a fixed number of houses $k \in \mathbb{N}$, which we denote by $J_{k,n}$. This information gives us insight into the distribution of the \emph{occupancy} $|C|:= \sum_{i=1}^n c_i$ among maximal configurations of length $n$, for all values of $n \in \mathbb{N}$. Knowing this quantity would lead to determining the so-called \emph{complexity} (see Remark \ref{rem:complexity}) of our model, namely a distribution of occupancy (or associated \emph{building density} which is defined as $\frac{|C|}{n}$) when the length of configurations $n$ grows large.

To this end, we calculate the bivariate generating function $g(x,y)$, where $x$ is a formal variable associated with the occupancy of a configuration, while $y$ remains a formal variable associated with the length of the configuration. We define the following matrix function: 
\begin{equation}
    \kbordermatrix{
		& \text{100} & \text{001} & \text{011} & \text{110} & \text{101} & \text{010}\\
		\text{100} & 0 & x & 0 & 0 & 0 & 0\\
		\text{001} & 0 & 0 & x & 0 & 0 & 0\\
		\text{011} & 0 & 0 & 0 & 1 & 0 & 0\\
		\text{110} & 1 & 0 & 0 & 0 & x & 0\\
		\text{101} & 0 & 0 & x & 0 & 0 & 1\\
		\text{010} & 0 & 0 & 0 & 0 & x & 0\\
	} =: A(x).
\end{equation}
The purpose of this matrix function is to encode when a transition on the graph in Figure \ref{fig:transfer_graph_Riviera} results in the increase of number of occupied lots. Namely:
\begin{equation}
    i \to j \mbox{ is a transition which adds an occupied lot} \iff [A(x)]_{i,j} = x,
\end{equation}
while the rest of the transitions which do not contribute an occupied lot are denoted with $1 = x^0$. The powers of $A(x)$, namely $(A(x))^n$, encode the distribution of occupancies for the configurations of length $n$. We have:
\begin{equation}
    [(A(x))^n]_{i,j} = p_0^{i,j} + p_1^{i,j} x + p_2^{i,j} x^2 + \dots + p_n^{i,j} x^n,
\end{equation}
where 
\begin{equation}
    p_k^{i,j} = \begin{array}{cc}
        \#  \mbox{ of walks of length $n$ on the graph in Figure \ref{fig:transfer_graph_Riviera} }  \\
         \mbox{starting with node $i$ and ending with $j$} \\
         \mbox{where the number of occupied lots was increased by $1$, $k$ times. }
    \end{array}
\end{equation}
In order to take into account the number of occupied lots with which we start the walk, we define vectors: 
\begin{equation}
    a(x) = (0,0,x^2,x^2,x^2,0)^T,\quad  b = (0,0,1,1,1,0)^T.
\end{equation}
By plugging this into the familiar formula and determining the first few terms, we obtain:  
\begin{align*}
	g(x, y)
	& = 1+xy+x^2y^2+ \sum_{n = 3}^{\infty} a(x)^T \cdot
	(A(x))^{n-3}\cdot
	b \cdot y^n \\
	& = \frac{1+xy-(x-x^2)y^2+x^2y^3-x^3y^5}{1-xy^2-x^2y^3-x^2y^4+x^3y^6} \\
	&  = \sum_{n=0}^\infty\sum_{k=0}^\infty J_{k,n}x^ky^n,
\end{align*}
where $J_{k,n}$ is precisely a number of maximal configurations of length $n$ with $k$ occupied lots. 

The bivariate generating function $g(x,y)$ encodes, among other things, the
information on the asymptotic behavior of our sequence $(a_n)$ (the same we
have recovered from the Perron-Frobenius eigenvalue of the transfer matrix
$A$), and also enables us to determine the expected values of two quantities
of interest -- the expected number of buildings in a maximal configuration
of a given length, and the expected length of a maximal configuration with
a given number of buildings. The starting point is the following classical
result, a version of Darboux's theorem as formulated in \cite{bgw}.
For more information on obtaining the asymptotics of a sequence from
its generating function we refer the reader to \cite{bgw,wilf}.

\begin{theorem}\label{tm:asymp}
If the generating function $f(x) =
\sum _{n \geq 0} a_n x^n$ of a sequence $(a_n)$ can be written in the form
$f(x) = \left ( 1 - \frac {x}{w} \right ) ^\alpha h(x)$, where $w$ is the
smallest modulus singularity of $f$ and $h$ is analytic in $w$, then
$a_n \sim \frac {h(w) n ^{- \alpha -1}}{\Gamma (- \alpha) w^n}$, where
$\Gamma $ denotes the gamma function.
\end{theorem}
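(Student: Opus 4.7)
The plan is to use classical singularity analysis. Since $h$ is analytic at $x=w$, I would first expand it as a Taylor series in the local variable $1-x/w$,
\begin{equation*}
h(x) = \sum_{k=0}^\infty c_k \left(1 - \frac{x}{w}\right)^k, \qquad c_0 = h(w),
\end{equation*}
which is valid in some neighborhood of $w$. Substituting into the hypothesized form of $f$ produces the decomposition
\begin{equation*}
f(x) = \sum_{k=0}^\infty c_k \left(1 - \frac{x}{w}\right)^{\alpha+k},
\end{equation*}
which reduces the problem to extracting coefficients from the elementary building blocks $(1-x/w)^{\alpha+k}$ and collecting the dominant term.

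For these building blocks I would apply the generalized binomial theorem,
\begin{equation*}
[x^n]\left(1-\frac{x}{w}\right)^{\alpha+k} = (-1)^n w^{-n} \binom{\alpha+k}{n} = \frac{1}{w^n}\binom{n-\alpha-k-1}{n},
\end{equation*}
and then use Stirling's formula on $\binom{n-\alpha-k-1}{n} = \Gamma(n-\alpha-k)/(\Gamma(n+1)\Gamma(-\alpha-k))$ to obtain the classical estimate
\begin{equation*}
\binom{n-\alpha-k-1}{n} \sim \frac{n^{-\alpha-k-1}}{\Gamma(-\alpha-k)} \qquad \text{as } n\to\infty,
\end{equation*}
valid provided $-\alpha-k$ is not a non-positive integer (for $k=0$ this is the implicit assumption that $\alpha$ is not a non-negative integer, otherwise $\Gamma(-\alpha)=\infty$ and the stated asymptotic is trivial). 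The $k=0$ term then contributes precisely $h(w)n^{-\alpha-1}/(\Gamma(-\alpha)w^n)$, which is the claimed main term, and each successive $k$ contributes a factor of $n^{-k}$ smaller.

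The hard part will be legitimizing the term-by-term coefficient extraction, since the Taylor series for $h$ around $w$ converges only in a bounded disk and summation over $k$ cannot be freely interchanged with the operator $[x^n]$. The standard remedy is to truncate at a chosen order $K\ge 1$, writing
\begin{equation*}
h(x) = \sum_{k=0}^{K-1} c_k \left(1-\tfrac{x}{w}\right)^k + \left(1-\tfrac{x}{w}\right)^K \tilde h_K(x)
\end{equation*}
with $\tilde h_K$ analytic at $w$, and then showing that the coefficients of the remainder $(1-x/w)^{\alpha+K}\tilde h_K(x)$ are of order $O(n^{-\alpha-K-1} w^{-n})$, hence absorbed into the leading term. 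This remainder estimate can be obtained either via Darboux's lemma (relying on smoothness of the boundary values together with integration by parts, and implicitly assuming that $w$ is the unique singularity of $f$ on the circle $|x|=w$) or, more flexibly, via a Flajolet--Sedgewick transfer theorem based on deforming the contour of the Cauchy integral for $a_n$ into a Hankel-type path in a slit neighborhood of $w$. Either route completes the proof, and the first $K=1$ suffices for the stated leading-order asymptotic.
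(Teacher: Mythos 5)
The paper does not actually prove this statement: it is quoted as a known classical result (a version of Darboux's theorem, attributed to the survey \cite{bgw}), and is then only applied to rational generating functions with a simple dominant pole, where $\alpha=-1$ and the conclusion follows from a partial-fraction expansion. Your proposal therefore supplies a proof where the paper supplies a citation, and the proof you sketch is the correct, standard one: the local expansion of $h$ in powers of $1-x/w$, the coefficient extraction $[x^n](1-x/w)^{\alpha+k}=w^{-n}\binom{n-\alpha-k-1}{n}\sim w^{-n}n^{-\alpha-k-1}/\Gamma(-\alpha-k)$, and the truncation-plus-remainder argument are all right, and you correctly identify the genuine technical issue (one cannot interchange $[x^n]$ with the infinite sum over $k$) together with the two standard ways to handle it. Two small caveats are worth making explicit, since the theorem as stated is silently assuming them: the formula is only meaningful when $\alpha\notin\{0,1,2,\dots\}$ and $h(w)\neq 0$ (otherwise $1/\Gamma(-\alpha)=0$ and the right-hand side is not a valid asymptotic equivalent), and one needs $w$ to be the \emph{unique} singularity of $f$ of minimal modulus (as the definite article in the statement suggests), since Darboux's integration-by-parts estimate and the transfer theorems both break down if other singularities sit on the circle $|x|=w$. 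Also, your closing remark that ``$K=1$ suffices'' is accurate for the transfer-theorem route but not for the classical Darboux route, where one must take $K$ large enough that $(1-x/w)^{\alpha+K}$ is sufficiently smooth on the circle of convergence before the Riemann--Lebesgue/integration-by-parts bound beats $n^{-\alpha-1}$. In the paper's actual use case ($\alpha=-1$, $f$ rational) none of these subtleties arise.
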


Now the expected number of built sites in a maximal configuration of length
$n$ can be computed (see \cite{wilf}) as 
$$\frac{[y^n] \frac{\partial g (x,y)}{\partial x} \left|_{x = 1}\right.}{[y^n] g (x,y) \left|_{x=1}\right.},$$
where $[y^n] F(y)$ denotes the coefficient of $y^n$ in the expansion of $F(y)$.

Since $g(1,y) = \frac{p(1,y)}{q(1,y)}$ is rational, its smallest modulus
singularity is the smallest (by absolute value) root of its denominator 
$q(1,y)$. While it does not have a closed-form expression, its approximate
numerical value is readily computed as $w = 0.713639$, the reciprocal
value of the Perron-Frobenius eigenvalue $1.401268$. Now we can write
$$g(x,y)\left |_{x = 1}\right . = \left ( 1-\frac{y}{w} \right )^{-1} g_1(y)$$
and $$\frac{\partial g (x,y)}{\partial x} \left |_{x = 1} \right . = 
\left ( 1-\frac{y}{w} \right )^{-2}g_2(y),$$
where
$$g_1(y) = \frac{p(1,y) (w-y)}{w q(1,y)} \,\,{\rm and}\,\, g_2(y) =
\frac{q(1,y) \frac{\partial p}{\partial x} (x,y) \left |_{x = 1} \right . -
p(1,y) \frac{\partial q}{\partial x} (x,y) \left |_{x = 1} \right .}{\left (w \frac{q(1,y)}{w-y}\right ) ^2}.$$
By evaluating the ratio $r(y) = \frac{g_2(y)}{g_1(y)}$ at $w$ we finally obtain
$$r(w) = \frac{1}{w} \frac{\frac{\partial q}{\partial x} (x,w) \left |_{x = 1}\right .}{\frac{\partial q}{\partial y} (1,y) \left |_{y = w} \right .} = 0.577203$$
and the expected number $\langle k(n) \rangle$ of buildings in a maximal
configuration of length $n$ is given as 
$$\langle k(n) \rangle = 0.577203 \,\,n.$$

If one defines the \emph{efficiency} of a maximal configuration as the ratio
of the actual number of occupied lots and the largest possible number of
occupied lots, which is $\left \lceil  \frac{2n}{3} \right \rceil $, one 
gets the \emph{expected efficiency} of a maximal configuration as
$$\varepsilon = \frac{0.577203 \,\,n}{\left \lceil  \frac{2n}{3} \right \rceil } = 0.865804.$$
This efficiency is higher than the efficiency of unrestricted $P_m$-packings
of $P_n$ for small $m$ (cf. \cite{doslicASR}).

\begin{remark}\label{rem:GFhFixHouses}
 Clearly, by choosing $x=1$ we obtain $f(y)$. However, by choosing $y=1$,
we obtain $h(x)=g(x,1)$ which is a generating function for the sequence
$(h_k)$ which counts the number of maximal configurations with a fixed
number of occupied lots (with variable length). One easily computes this
function as
 \begin{equation}
     h (x) =  \dfrac{1+2x^2-x^3}{1-x-2x^2+x^3}.
 \end{equation}
Straight away we read the recurrence relation for the sequence $(h_k)$:
\begin{equation}
     \begin{array}{cc}
          h_k = h_{k-1} + 2h_{k-2} - h_{k-3}, \quad k \geq 4,  \\
          h_1 = 1, \quad h_2 = 5, \quad h_3 = 5.
     \end{array}
 \end{equation}
By a completely analogous procedure, with switched roles of $x$ and $y$,
one can now compute the expected length $\langle n(k) \rangle$ of a maximal
configuration with $k$ buildings as
$$ \langle n(k) \rangle = 1.758283 \,\,k.$$
We omit the details.

It is, perhaps, interesting to note that $\frac{1}{1.758283} = 0.568737$ which is, as one might hope, close to previously computed $0.577203$. Although this is what one might expect, there is no reason, in general, why should $\frac{\langle n(k)\rangle}{k}$ be equal (or even close) to $\frac{n}{\langle k(n)\rangle}$.

\end{remark}

\begin{remark}\label{rem:complexity}
 	For a building density $\rho\in[0,1]$ one can introduce the number $J_n(\rho) = J_{\floor{\rho n},n}$ which counts the number of configurations of length $n$ with exactly $\FLOOR{\rho n}$ occupied lots. The function 
 	\begin{equation}
 	    f(\rho) = \lim_{n\to \infty}\frac{\ln J_n(\rho)}{n}
 	\end{equation}
 	is called the complexity and it represents the exponential growth rate of the number of configurations with building density $\rho$ as their length $n$ increases. With this function in hand, one can express the asymptotic behavior of the sequence $(J_n(\rho))$ for large $n$ as:
 	\begin{equation}\label{eq:complexity}
 	    J_n(\rho)\sim h(n)\, e^{nf(\rho)} = h(n)\, \left(e^{f(\rho)}\right)^n, \qquad n\to \infty, \quad\rho \in [0,1]
 	\end{equation}
 	where $h(n)$ is some subexponential growing function.
One can easily verify that $\supp f= \left[\frac12,\frac23\right]$. Also, comparing \eqref{eq:complexity} with Remark \ref{rem:PF-Riviera} we infer:
\begin{equation}
    \max_\rho f(\rho) =\ln  \lambda=\ln 1.401268 = 0.337377.
\end{equation}
However, determining the precise formula for $f$ remains an open problem.

In Figure \ref{fig:complexity} the precise values of $J_{k,100}$ are plotted on the log-scale. For large $n$, one expects the shape of this bar plot to approximate the true shape of the complexity function $f(\rho)$.
\end{remark}

\begin{figure}[h!]
    \begin{tikzpicture}
        \begin{axis}
        [ybar,
        bar width=7pt,
        ymode=log,
        xlabel = {$k$},
        ylabel = {$J_{k, n}$}]
        \addplot coordinates {
            (50, 1)
            (51, 40950)
            (52, 47298420)
            (53, 7491483870)
            (54, 308534750280)
            (55, 4489680958620)
            (56, 27525656572050)
            (57, 79341335532896)
            (58, 115332553142708)
            (59, 88281950244176)
            (60, 36391488209400)
            (61, 8109317836050)
            (62, 961479094515)
            (63, 58247672238)
            (64, 1668933267)
            (65, 19597456)
            (66, 70345)
            (67, 34)
            };
        \end{axis}
    \end{tikzpicture}
    \caption{Precise values of $J_{k, n}$ (log-scale) for $n = 100$.}\label{fig:complexity}
\end{figure}
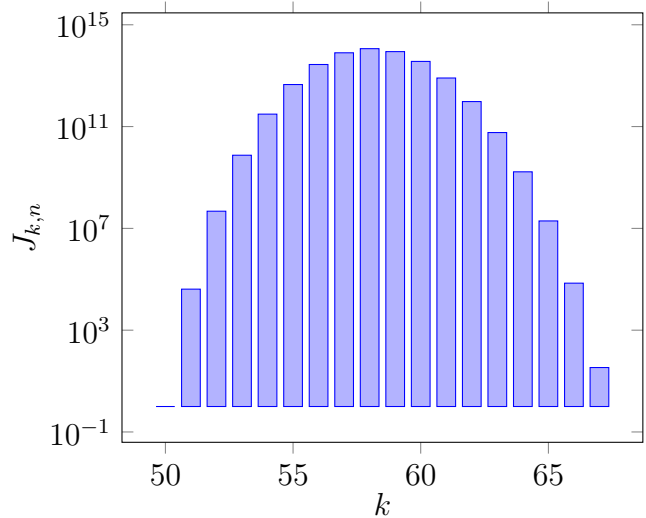

\subsection{Surprising relationship between the Riviera model and other combinatorial objects}

The integer sequence $(a_n)$ associated with the generating function $f=f(y)$ can be evaluated for any $n \in \mathbb{N}$ simply by expanding $f$ into the formal power series in powers of $y$. Even more, by expanding the bivariate generating function $g=g(x,y)$, we obtain the precise distribution of the occupancies of maximal configurations relative to their length. The first few coefficients in the expansion of $g(x,y)$ are given in Table \ref{tab:gf}. By inspecting the non-zero coefficients in the table, we see that the ratio $\frac{k}{n}$ is in-between $\frac{1}{2}$ and $\frac{2}{3}$, for large $n$.
\begin{table}[!h]
	\caption{The first few coefficients in the expansion of the bivariate generating function $g(x,y)$.}\label{tab:gf}
	\small
	\begin{tabular}{c|cccccccccccccccccc}
		$k\backslash n$ & $1$ & $y$ & $y^2$ & $y^3$ & $y^4$ & $y^5$ & $y^6$ & $y^7$ & $y^8$ & $y^9$ & $y^{10}$ & $y^{11}$ & $y^{12}$ & $y^{13}$ & $y^{14}$ & $y^{15}$ & $y^{16}$ & $y^{17}$ \\\hline
		$1$      & $1$ &     &     &     &     &     &     &     &      &      &      &      &      &      &      &      &       &       \\
		$x$      &     & $1$ &     &     &     &     &     &     &      &      &      &      &      &      &      &      &       &       \\
		$x^2$    &     &     & $1$ & $3$ & $1$ &     &     &     &      &      &      &      &      &      &      &      &       &       \\
		$x^3$    &     &     &     &     & $2$ & $3$ &     &     &      &      &      &      &      &      &      &      &       &       \\
		$x^4$    &     &     &     &     &     & $1$ & $6$ & $6$ & $1$  &      &      &      &      &      &      &      &       &       \\
		$x^5$    &     &     &     &     &     &     &     & $3$ & $10$ & $6$  &      &      &      &      &      &      &       &       \\
		$x^6$    &     &     &     &     &     &     &     &     & $1$  & $10$ & $20$ & $10$ & $1$  &      &      &      &       &       \\
		$x^7$    &     &     &     &     &     &     &     &     &      &      & $4$  & $22$ & $30$ & $10$ &      &      &       &       \\
		$x^8$    &     &     &     &     &     &     &     &     &      &      &      & $1$  & $15$ & $49$ & $50$ & $15$ & $1$   &       \\
		$x^9$    &     &     &     &     &     &     &     &     &      &      &      &      &      & $5$  & $40$ & $91$ & $70$  & $15$  \\
		$x^{10}$ &     &     &     &     &     &     &     &     &      &      &      &      &      &      & $1$  & $21$ & $100$ & $168$ \\
	\end{tabular}
\end{table}


The first several values of $(a_n)$ can be read as column sums: $1$, $1$, $3$, $3$, $4$, $6$, $9$, $12$, $16$, $24$, $33$, $46$, $64$, $\dots$ On the other hand, one might do the same for the generating function $h = h(x)$ and the associated sequence $(h_k)$. The first several values of $(h_k)$ can be read as row sums: $1$, $5$, $5$, $14$, $19$, $42$, $66$, $131$, $\dots$ Also, the expansion of $g$ gives insight into the distribution of lengths of maximal configurations relative to the number of occupied lots.


Upon consulting The On-Line Encyclopedia of Integer Sequences \cite{oeis}, we have come across the fact that these sequences were studied in quite different settings (cf.~\href{https://oeis.org/A080013}{A080013} \& \href{https://oeis.org/A096976}{A096976}). We illustrate these connections in the following subsections.

\subsubsection{Bijection with strongly restricted permutations}

The notion of strongly restricted permutations was introduced by Lehmer in \cite{Lehmer}. If $W$ is some fixed subset of integers, one would like to count the number of all the permutations $\pi\in S_n$\footnote{$S_n$ denotes the set of all permutations of the set $[n]=\{1,\dots,n\}$.} such that $\pi(i)-i \in W$, for all $i\in[n]$.

In \cite[Examples 4.7.9, 4.7.17--18]{Stanley} two techniques are presented for obtaining the generating function for the number of strongly restricted permutations for some particular sets $W$, namely the transfer-matrix method and the technique using factorization in free monoids.

In \cite{Baltic} the author devised a new technique for counting restricted permutations in case $\min W =-k$ and $\max W =r$ for some positive integers $k\le r$. When $W=\{-2,-1,2\}$, the sequence counting the corresponding restricted permutations of length $n$ appears in the OEIS under the number \href{https://oeis.org/A080013}{A080013}. The generating function of that sequence is $\dfrac{1-y^2}{1-y^2-y^3-y^4+y^6}$. Note that
$$ \frac{1-y^2}{1-y^2-y^3-y^4+y^6}  = 1+y^3+y^4\cdot \frac{1+y+y^3-y^5}{1-y^2-y^3-y^4+y^6} = 1+y^3+y^4 \cdot f(y),$$
where $f(y)$ is the generating function for the number of Riviera configurations \eqref{eq:GF4offA080013} of fixed length $n$. From here,  the following result is immediate.

\begin{theorem}
	The number of maximal configurations of length $n$ in the Riviera model is equal to the number of permutations $\pi$ of length $n+4$, which satisfy the constraint
	\begin{equation}
	\label{permutationsconstraint}
	    \pi(i)-i\in\{-2,-1,2\}.
	\end{equation}
\end{theorem}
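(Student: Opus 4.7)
The plan is to deduce the claim purely from an algebraic identity between generating functions, piggybacking on $f(y)$ from \eqref{eq:GF4offA080013} and a known formula for the restricted-permutation generating function. Denote by $(a_n)$ the sequence enumerating maximal Riviera configurations of length $n$ and by $(p_n)$ the sequence counting permutations of $[n]$ satisfying \eqref{permutationsconstraint}; the goal is then to show $a_n = p_{n+4}$ for every $n \geq 1$.

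First, I would quote the generating function for $(p_n)$ established by Balti\'c in \cite{Baltic} (and recorded in OEIS \href{https://oeis.org/A080013}{A080013}):
\[
P(y) \;=\; \frac{1-y^2}{1-y^2-y^3-y^4+y^6}.
\]
The next step is to verify the polynomial identity
\[
1-y^2 \;=\; (1-y^2-y^3-y^4+y^6)(1+y^3) \;+\; y^4(1+y+y^3-y^5),
\]
which is a one-line expansion (the $y^3$, $y^4$, $y^5$, $y^6$, $y^7$, and $y^9$ contributions cancel in pairs). Dividing through by the common denominator $1-y^2-y^3-y^4+y^6$ and using \eqref{eq:GF4offA080013}, this identity rewrites as
\[
P(y) \;=\; 1 + y^3 + y^4\, f(y),
\]
which is exactly the observation made immediately before the theorem.

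Finally, I would extract coefficients of $y^{n+4}$ on both sides. Since the correction term $1 + y^3$ contributes only in degrees $0$ and $3$, for every $n \geq 1$ we obtain $p_{n+4} = a_n$, which is precisely the claim. As a sanity check — and to rule out any indexing mishap at the boundary — one confirms the first few entries against the expansions, e.g.\ $p_5 = p_6 = 1 = a_1 = a_2$ and $p_7 = p_8 = 3 = a_3 = a_4$, in agreement with the row/column totals in Table~\ref{tab:gf}.

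The only potential obstacle in this outline is the mechanical polynomial identity above, together with keeping the length-shift by $4$ honest; beyond that, the theorem really is immediate once both generating functions are in hand. The richer statement hinted at in the introduction — an \emph{explicit bijection} between maximal Riviera configurations of length $n$ and restricted permutations of length $n+4$ — would be considerably more work, requiring a combinatorial interpretation of the offset $y^4$ and a reversible construction; that I would defer to a separate argument rather than attempt as part of proving the present cardinality equality.
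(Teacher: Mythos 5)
Your argument is correct and is exactly the paper's own proof: the paper derives the theorem from the same identity $P(y)=1+y^3+y^4 f(y)$ stated immediately before the theorem and declares the result ``immediate,'' while you merely make the polynomial verification and coefficient extraction explicit. Your closing remark is also on target --- the explicit bijection is indeed treated by the paper as a separate, subsequent construction rather than as part of this cardinality statement.
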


It turns out that one can construct a natural bijection between these two types of objects. The idea is to encode restricted permutations as walks on some digraph, similar to the one in Figure \ref{fig:transfer_graph_Riviera}. If those two graphs are isomorphic, this isomorphism would automatically produce a bijection between the underlying combinatorial objects.

To construct this digraph we, once again, use the transfer-matrix method. One can argue as in \cite[Example 4.7.9]{Stanley} to show that the method is applicable in this case. Let $\pi\in S_n$ be a permutation for which $\pi(i)-i\in W=\{-2,-1,2\}$, for all $i\in[n]$. One can rewrite such a permutation as a sequence of symbols in $W$. In order to check that such a sequence $u_1\dots u_n$ corresponds to a valid permutation, it suffices to check all the substrings of length $5$. This is because the function $\FJADEF{\sigma}{[n]}{[n]}$ defined as $\sigma(i)=i+u_i$ will be a permutation as soon as it is onto; and for this, one only needs to check whether $i\in \{\sigma(i-2), \sigma(i-1), \sigma(i), \sigma(i+1), \sigma(i+2)\}$, for all $3\le i \le n-2$. Additionally, one needs to check that $1$ and $2$ are in the set $\{\sigma(1), \sigma(2), \sigma(3), \sigma(4)\}$, and that $n-1$ and $n-2$ are in the set $\{\sigma(n-3), \sigma(n-2), \sigma(n-1), \sigma(n)\}$. The effect of this being that the walks must start and end at a certain subset of vertices of the constructed digraph. From here, one can write Algorithm \ref{alg:digrafPerm} that produces this digraph which is an induced subgraph of the de Bruijn graph over the set of all $5$ letter words in the alphabet $\{-2,-1,2\}$.

\begin{algorithm}
	\caption{The creation of the digraph $\mathcal{G}$ for strongly restricted permutations}\label{alg:digrafPerm}
	\begin{algorithmic}
		\State AllowedNodes $=\emptyset$
		\State StartNodes $=\emptyset$
		\State EndNodes $=\emptyset$
		\For{$u_1u_2u_3u_4u_5 \in {\{-2,-1,2\}^5}$}
			\If{$3\in \{ 1+u_1,2+u_2,3+u_3,4+u_4,5+u_5 \}$}
			\State add node $u_1u_2u_3u_4u_5$ to AllowedNodes
			\If{$1,2\in \{ 1+u_1,2+u_2,3+u_3,4+u_4,5+u_5 \}$}
			\State add node $u_1u_2u_3u_4u_5$ to StartNodes
			\EndIf
			\If{$4,5\in \{ 1+u_1,2+u_2,3+u_3,4+u_4,5+u_5 \}$}
			\State add node $u_1u_2u_3u_4u_5$ to EndNodes
			\EndIf
			\EndIf
		\EndFor
		\\
		\State $\mathcal{E} = \emptyset$
		\For{$u_1u_2u_3u_4u_5, v_1v_2v_3v_4v_5\in$ AllowedNodes}
		\If{$u_2u_3u_4u_5 = v_1v_2v_3v_4$}
		\State add edge $u_1u_2u_3u_4u_5 \to v_1v_2v_3v_4v_5$ to $\mathcal{E}$
		\EndIf
		\EndFor
		\\
		\State $\mathcal{V} = \emptyset$
		\For{$u_1u_2u_3u_4u_5\in$ AllowedNodes}
		\If{there is a path starting in StartNodes, passing through $u_1u_2u_3u_4u_5$ and ending in EndNodes}
		\State add node $u_1u_2u_3u_4u_5$ to $\mathcal{V}$
		\EndIf
		\EndFor
		\\
		\State remove from $\mathcal{E}$ all the edges not involving nodes in $\mathcal{V}$
		\State\Return $\mathcal{G}=(\mathcal{V},\mathcal{E})$
	\end{algorithmic}
\end{algorithm}

The digraph $\mathcal{G}$ constructed in Algorithm \ref{alg:digrafPerm} has the vertex set $\mathcal{V}$ consisting of $30$ allowed words of length $5$. It turns out that this graph can be further condensed to give a smaller representation of our strongly restricted permutations. If one considers all the $4$-letter words $\{-2,-1,2\}^4$ that do not appear as substrings of the $30$ allowed words, one gets $59$ forbidden words of length $4$. By inspection, one can check that each of the $213=3^5-30$ forbidden $5$-letter words contains one of the $4$-letter forbidden words which means that the same information contained in $\mathcal{G}$ can be encoded in a digraph with a vertex set consisting of only $22=3^4-59$ $4$-letter words. Finally, if we use edges to encode allowed words, rather than just taking the whole induced subgraph of the corresponding de Bruijn graph, we can condense this digraph even further, and obtain the digraph in Figure \ref{fig:amforapermutacije} with 15 nodes representing allowed 3-letter words and an edge from $u_1u_2u_3$ to $v_1v_2v_3$ if and only if $u_1u_2u_3v_3=u_1v_1v_2v_3$ is allowed $4$-letter word. The highlighted nodes are either starting or ending nodes, or, in one case, both.

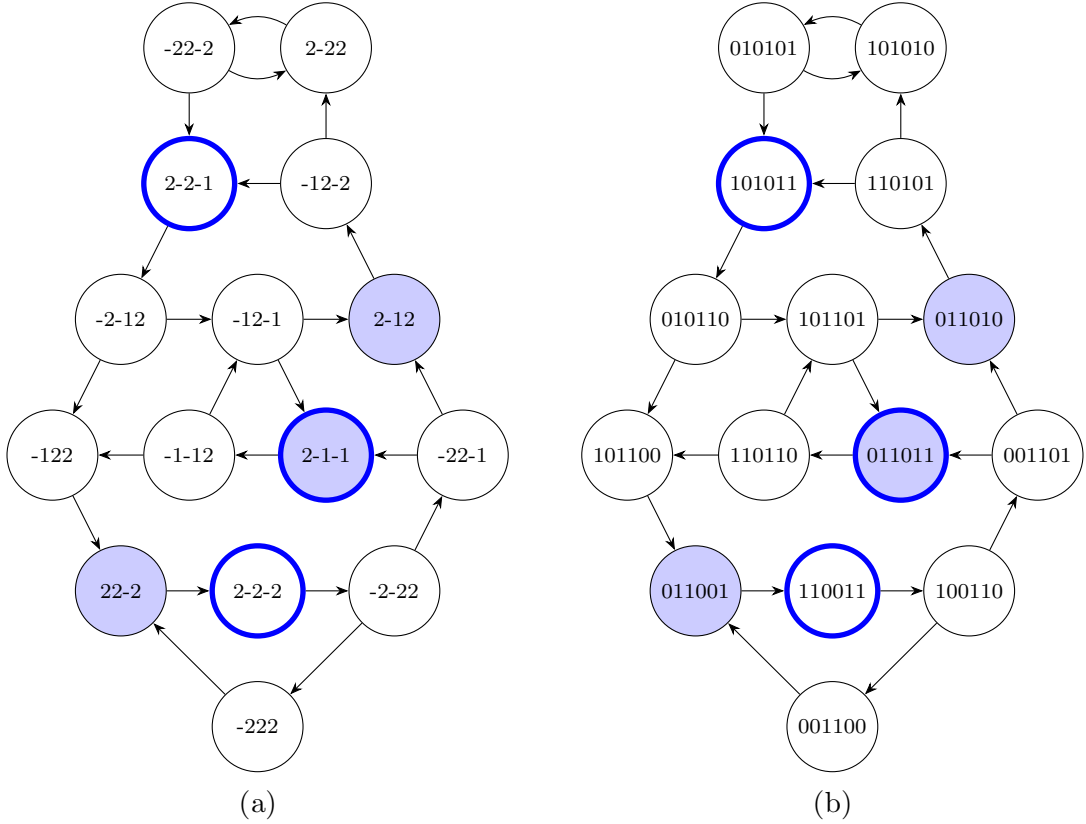
\begin{figure}
	\begin{subfigure}{.495\linewidth}\centering
		\begin{tikzpicture}[scale=1.8, nodeStyle/.style={draw, circle, minimum size=4em}, edgeStyle/.style={-Stealth}]\tiny
		
		\node[nodeStyle] at (0,0) (-222) {-222};
		
		\node[nodeStyle, fill=blue!20] at (-1,1) (22-2) {22-2};
		\node[nodeStyle, draw=blue, line width=2pt] at (0,1) (2-2-2) {2-2-2};
		\node[nodeStyle] at (1,1) (-2-22) {-2-22};
		
		\node[nodeStyle] at (-1.5,2) (-122) {-122};
		\node[nodeStyle] at (-.5,2) (-1-12) {-1-12};
		\node[nodeStyle, fill=blue!20, draw=blue, line width=2pt] at (.5,2) (2-1-1) {2-1-1};
		\node[nodeStyle] at (1.5,2) (-22-1) {-22-1};
		
		\node[nodeStyle] at (-1,3) (-2-12) {-2-12};
		\node[nodeStyle] at (0,3) (-12-1) {-12-1};
		\node[nodeStyle, fill=blue!20] at (1,3) (2-12) {2-12};
		
		\node[nodeStyle, draw=blue, line width=2pt] at (-.5,4) (2-2-1) {2-2-1};
		\node[nodeStyle] at (.5,4) (-12-2) {-12-2};
		
		\node[nodeStyle] at (-.5,5) (-22-2) {-22-2};
		\node[nodeStyle] at (.5,5) (2-22) {2-22};
		
		
		\draw[edgeStyle] (-2-12) to (-122);
		\draw[edgeStyle] (-2-12) to (-12-1);
		\draw[edgeStyle] (-122) to (22-2);
		\draw[edgeStyle] (-12-1) to (2-12);
		\draw[edgeStyle] (-12-1) to (2-1-1);
		\draw[edgeStyle] (-2-22) to (-222);
		\draw[edgeStyle] (-2-22) to (-22-1);
		\draw[edgeStyle] (-222) to (22-2);
		\draw[edgeStyle] (-22-1) to (2-12);
		\draw[edgeStyle] (-22-1) to (2-1-1);
		\draw[edgeStyle] (-1-12) to (-122);
		\draw[edgeStyle] (-1-12) to (-12-1);
		\draw[edgeStyle] (-22-2) to (2-2-1);
		\draw[edgeStyle] (-22-2) to [bend right] (2-22);
		\draw[edgeStyle] (2-2-1) to (-2-12);
		\draw[edgeStyle] (2-22) to [bend right] (-22-2);
		\draw[edgeStyle] (2-2-2) to (-2-22);
		\draw[edgeStyle] (22-2) to (2-2-2);
		\draw[edgeStyle] (2-12) to (-12-2);
		\draw[edgeStyle] (-12-2) to (2-2-1);
		\draw[edgeStyle] (-12-2) to (2-22);
		\draw[edgeStyle] (2-1-1) to (-1-12);
		\end{tikzpicture}
		\caption{}
		\label{fig:amforapermutacije}
	\end{subfigure}
	\begin{subfigure}{.495\linewidth}\centering
		\begin{tikzpicture}[scale=1.8, nodeStyle/.style={draw, circle,minimum size=4em}, edgeStyle/.style={-Stealth}]\tiny
		
		\node[nodeStyle] at (0,0) (001100) {001100};
		
		\node[nodeStyle, fill=blue!20] at (-1,1) (011001) {011001};
		\node[nodeStyle, draw=blue, line width=2pt] at (0,1) (110011) {110011};
		\node[nodeStyle] at (1,1) (100110) {100110};
		
		\node[nodeStyle] at (-1.5,2) (101100) {101100};
		\node[nodeStyle] at (-.5,2) (110110) {110110};
		\node[nodeStyle, fill=blue!20, draw=blue, line width=2pt] at (.5,2) (011011) {011011};
		\node[nodeStyle] at (1.5,2) (001101) {001101};
		
		\node[nodeStyle] at (-1,3) (010110) {010110};
		\node[nodeStyle] at (0,3) (101101) {101101};
		\node[nodeStyle, fill=blue!20] at (1,3) (011010) {011010};
		
		\node[nodeStyle, draw=blue, line width=2pt] at (-.5,4) (101011) {101011};
		\node[nodeStyle] at (.5,4) (110101) {110101};
		
		\node[nodeStyle] at (-.5,5) (010101) {010101};
		\node[nodeStyle] at (.5,5) (101010) {101010};
		
		
		\draw[edgeStyle] (010110) to (101100);
		\draw[edgeStyle] (010110) to (101101);
		\draw[edgeStyle] (101100) to (011001);
		\draw[edgeStyle] (101101) to (011010);
		\draw[edgeStyle] (101101) to (011011);
		\draw[edgeStyle] (100110) to (001100);
		\draw[edgeStyle] (100110) to (001101);
		\draw[edgeStyle] (001100) to (011001);
		\draw[edgeStyle] (001101) to (011010);
		\draw[edgeStyle] (001101) to (011011);
		\draw[edgeStyle] (110110) to (101100);
		\draw[edgeStyle] (110110) to (101101);
		\draw[edgeStyle] (010101) to (101011);
		\draw[edgeStyle] (010101) to [bend right] (101010);
		\draw[edgeStyle] (101011) to (010110);
		\draw[edgeStyle] (101010) to [bend right] (010101);
		\draw[edgeStyle] (110011) to (100110);
		\draw[edgeStyle] (011001) to (110011);
		\draw[edgeStyle] (011010) to (110101);
		\draw[edgeStyle] (110101) to (101011);
		\draw[edgeStyle] (110101) to (101010);
		\draw[edgeStyle] (011011) to (110110);
		\end{tikzpicture}
		\caption{}
		\label{fig:amforakucice}
	\end{subfigure}
	\caption{
		The digraph $\mathcal{G_P}$ in \ref{fig:amforapermutacije} encodes strongly restricted permutations satisfying the constraint \eqref{permutationsconstraint}. The starting nodes are \textcolor{blue!70}{shaded} and \textcolor{blue}{thicker outlines} indicate the ending nodes. The digraph $\mathcal{G_R'}$ in \ref{fig:amforakucice} encodes configurations of the Riviera model using substrings of length $6$. The nodes corresponding to the highlighted nodes in \ref{fig:amforapermutacije} via the unique digraph isomorphism are shaded and outlined in this graph too.}\label{fig:amfora}
\end{figure}

We would now like to match the digraph in Figure \ref{fig:amforapermutacije}, call it $\mathcal{G_P}$, with the digraph in Figure \ref{fig:transfer_graph_Riviera}, call it $\mathcal{G_R}$. Unfortunately, they are not isomorphic, but we can try to create higher edge graphs from the digraph $\mathcal{G_R}$, details below, which encode the same information as $\mathcal{G_R}$ --- in hope of obtaining a graph isomorphic to $\mathcal{G_P}$. This process is opposite of `condensation' we have performed to the digraph produced by the Algorithm \ref{alg:digrafPerm} in order to obtain the digraph $\mathcal{G_P}$.

We have already noted that the graph $\mathcal{G_R}$ shown in Figure \ref{fig:transfer_graph_Riviera} is a subgraph of the $3$-dimensional de Bruijn graph over the alphabet $\{0,1\}$. We can construct a subgraph of the $n$-dimensional de Bruijn digraph, where $n>3$, over the same alphabet, which will encode the same information as $\mathcal{G_R}$. It turns out that $n=6$ will do. The vertex set of this, so called, higher edge graph $\mathcal{G_R'}$ consists of all the allowed words of length $6$, which one can think of as all the possible walks of length $3$ on the graph $\mathcal{G_R}$. A directed edge from $c_1\dots c_6$ to $d_1\dots d_6$ is added to the edge set of $\mathcal{G_R'}$ if and only if the corresponding words overlap progressively ($c_2\dots c_6 = d_1\dots d_5$). The digraph $\mathcal{G_R'}$ is, therefore, the vertex-induced subgraph of the corresponding $6$-dimensional de Bruijn graph. For more details on construction of higher edge graphs, see \cite[Definition 2.3.4]{SymbDynCoding}.

The graph $\mathcal{G_R'}$ obtained by the above procedure, is shown in Figure \ref{fig:amforakucice}. Note that it is isomorphic to $\mathcal{G_P}$, and that this isomorphism is unique. Also note that the set of nodes at which the walks on $\mathcal{G_R'}$ would be allowed to start and end is much larger than the set highlighted in Figure \ref{fig:amforakucice}. More precisely, any node $c_1\dots c_6$ for which $c_1c_2c_3 \in \{110,101,011\}$ would be a starting node, and if $c_4c_5c_6 \in \{110,101,011\}$, it would be an ending node. But the walks of length $n+1$ on $\mathcal{G_R'}$ would then account for all the maximal configurations in the Riviera model of length $n+7$ --- and that is not what we want, since the walks of length $n+1$ on $\mathcal{G_P}$ encode the strictly restricted permutations of $[n+4]$.

If we consider the walks on $\mathcal{G_R'}$ which start and end at the nodes that correspond to starting and ending nodes in $\mathcal{G_P}$, we immediately note that all the configurations obtained in such a way always start with $0110$ and end with $011$. Using the graph $\mathcal{G_R}$ in Figure \ref{fig:transfer_graph_Riviera} it is clear that adding the prefix $0110$ and suffix $011$ to a maximal configuration, again produces a 7-blocks longer (permissible) maximal configuration. This is because from each starting node, there is a backward path (going along edges in the direction opposite to the arrow direction) of length $4$ which produces the prefix $0110$; also from each ending node, there is a $3$-step continuation of path which produces the suffix $011$. Conversely, removing that same prefix and suffix from a maximal configuration of length $n+7$, produces a maximal configuration of length $n$. We can again argue using the graph $\mathcal{G_R}$. Any walk starting with $011 \to 110$ after three steps must again reach one of the starting nodes; and walk ending in $011$ when traced backwards must, after three steps going backwards, reach one of the ending nodes. This shows that there is a bijective correspondence between all the maximal Riviera configurations of length $n$ and the maximal Riviera configurations of length $n+7$ starting with $0110$ and ending with $011$ which in turn are in a bijective correspondence with the strongly restricted permutations of length $n+4$. The bijection is obtained by translating walks on $\mathcal{G_P}$ to walks on $\mathcal{G_R'}$ and the other way around.

It is, in fact, possible to specify this bijection even more concisely, circumventing the graphs in Figure \ref{fig:amfora} altogether. Compare each edge in $\mathcal{G_R}$ with all its associated edges in $\mathcal{G_R'}$ and note that the corresponding edges in graph $\mathcal{G_P}$ all represent adding the same symbol at the end. E.g.\ the transition $011 \to 110$ in $\mathcal{G_R}$ corresponds to transitions $101011 \to 010110$, $011011 \to 110110$ and $110011 \to 100110$ in $\mathcal{G_R'}$ and all of them in $\mathcal{G_P}$ correspond to adding the letter $2$ at the end. Collecting all this information together, we can label the edges of the graph $\mathcal{G_R}$ in Figure \ref{fig:transfer_graph_Riviera} with the appropriate letter which is being added in the permutation graph $\mathcal{G_P}$ corresponding to that transition. This edge-labeled graph is given in Figure \ref{fig:graf_za_bijekciju}.

We now summarize how to bijectively map any maximal Riviera configuration of length $n$ to a strongly restricted permutation of length $n+4$ using Figure \ref{fig:graf_za_bijekciju}. Take any such maximal configuration and prefix it with $0110$ and suffix it with $011$. Then take a walk over the graph in Figure \ref{fig:graf_za_bijekciju} (which will be of length $n+7-3=n+4$) and collect the labels $u_1\dots u_{n+4}$ of all the edges traversed. Finally, construct the bijection $\FJADEF{\sigma}{[n+4]}{[n+4]}$ as $\sigma(i)=i+u_i$ for $i\in [n+4]$.

As an example, the maximal configuration $10110$ is first enlarged to the maximal configuration $\textcolor{blue}{0110}|10110|\textcolor{blue}{011}$. Next, we examine the unique walk determined by this configuration:  $\text{011} \stackrel{2}{\longrightarrow} \text{110} \stackrel{-1}{\longrightarrow} \text{101} \stackrel{2}{\longrightarrow} \text{010} \stackrel{-2}{\longrightarrow} \text{101} \stackrel{-1}{\longrightarrow} \text{011} \stackrel{2}{\longrightarrow} \text{110} \stackrel{2}{\longrightarrow} \text{100} \stackrel{-2}{\longrightarrow} \text{001} \stackrel{-2}{\longrightarrow} \text{011}$.  This walk generates the permutation $\sigma$ encoded with the string 2-12-2-122-2-2, which is the permutation $\begin{pmatrix}
1 & 2 & 3 & 4 & 5 & 6 & 7 & 8 & 9 \\
3 & 1 & 5 & 2 & 4 & 8 & 9 & 6 & 7 
\end{pmatrix}$.

\begin{figure}
	\begin{tikzpicture}[node distance=5em,nodeStyle/.style={draw, circle, minimum size=2.5em}]
	\node (A) [nodeStyle] {100};
	\node (B) [right of = A, nodeStyle] {001};
	\node (C) [right of = B, nodeStyle, fill=blue!20] {011};
	\node (D) [right of = C, nodeStyle, fill=blue!20] {110};
	\node (E) [right of = D, nodeStyle, fill=blue!20] {101};
	\node (F) [right of = E, nodeStyle] {010};
	\draw[-Stealth,above] (A) edge[bend left] node {-2} (B);
	\draw[-Stealth,above] (B) edge[bend left] node {-2} (C);
	\draw[-Stealth,above] (F) edge[bend left] node[below] {-2} (E);
	\draw[-Stealth,above] (D) edge[bend left] node {-1} (E);
	\draw[-Stealth,above] (D) edge[bend left] node[below] {2} (A);
	\draw[-Stealth,above] (C) edge[bend left] node {2} (D);
	\draw[-Stealth,above] (E) edge[bend left] node {2} (F);
	\draw[-Stealth,above] (E) edge[bend left] node[below] {-1} (C);
	\end{tikzpicture}
    \caption{The digraph $\mathcal{G_R}$ with labeled edges which encodes the bijection between maximal configurations of length $n$ in the Riviera model and strongly restricted permutations of length $n+4$ where $W=\{-2,-1,2\}$.}
    \label{fig:graf_za_bijekciju}
\end{figure}
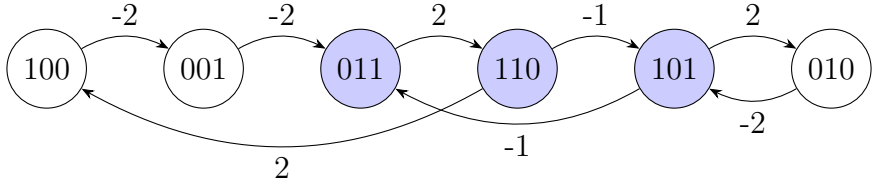

We end this section with a remark which will prove useful in the next subsection.
\begin{remark}\label{rem:1001}
	Above, we have argued that taking any maximal configuration $c_1 \dots c_n$ and prefixing it with $0110$ and suffixing it with $011$ yields a bijection between all the maximal Riviera configurations of length $n$ and the maximal Riviera configurations of length $n+7$ starting with $0110$ and ending with $011$.
	
	If we further add prefix $10$ and suffix $001$ to these already extended configurations, we obtain a bijective correspondence between the maximal Riviera configurations $c_1 \dots c_n$ of length $n$ and the configurations of length $n+12$ starting with $100110$ and ending with $011001$ which, although not maximal (because of the boundary condition), do not contain\footnote{Here, and throughout this section, by contained we mean contained as a substring in unpadded configurations.} any substrings forbidden by Lemma \ref{lemma_forbidden_words}. Each of those extended configurations can, therefore, be represented as a walk on $\mathcal{G_R}$ (Figure \ref{fig:transfer_graph_Riviera}) starting at the node $100$ and ending at the node $001$. Conversely, if a configuration $100110 c_1\dots c_n 011001$ (of length $n+12$) does not contain any substrings forbidden by Lemma \ref{lemma_forbidden_words}, or equivalently, can be represented as a walk on $\mathcal{G_R}$ (of length $n+9$) starting at $100$ and ending at $001$, then after removing prefix $ 100110$ and suffix $011001$ one is left with a proper maximal configuration $c_1\dots c_n$ of length $n$. This is because removing prefix and suffix corresponds to cutting off the first part of the walk $100$-$001$-$011$-$110$-$10c_1$-$0c_1c_2$ and the last part of the walk $c_{n-1}c_n0$-$c_n01$-$011$-$110$-$100$-$001$. Note that regardless of what $c_1$, $c_2$, $c_{n-1}$, and $c_n$ are --- the next node after $0c_1c_2$ as well as the node just before $c_{n-1}c_n0$ will always have to be one of the starting/ending nodes, which means that the remaining part of the walk encodes a proper maximal configuration $c_1c_2\dots c_{n-1}c_n$.
\end{remark}

\subsubsection{Bijection with the closed walks on the $P_3$ with a loop}

\begin{figure}
	\begin{subfigure}{.49\linewidth}\centering
		\begin{tikzpicture}[node distance=5em,every loop/.style={}]
		\node (A) {\bf\color{red}\Large$\bullet$};
		\node (B) [right of = A] {$\bullet$};
		\node (C) [right of = B] {$\bullet$};
		\draw[above] (A) edge (B);
		\draw[above] (B) edge (C);
		\draw (C) edge[loop,out=45,in=-45,looseness=8] (C)[below];
		\end{tikzpicture}
		\caption{$P_3$ with a loop, $\mathcal{P}$ for short}\label{fig:p3withloop}
	\end{subfigure}
	\begin{subfigure}{.49\linewidth}\centering
		\begin{tikzpicture}[node distance=5em,every loop/.style={}]
		\node (A) {\bf\color{red}1001};
		\node (B) [right of = A] {11};
		\node (C) [right of = B] {101};
		\draw[above] (A) edge (B);
		\draw[above] (B) edge (C);
		\draw (C) edge[loop,out=45,in=-45,looseness=8] (C)[below];
		\end{tikzpicture}
		\caption{$\mathcal{P}$ with labeled nodes}\label{fig:LabeledP_3}
	\end{subfigure}
	\caption{}
\end{figure}

In Remark \ref{rem:GFhFixHouses} we have derived the generating function $h(x)$ for the number of Riviera configurations (of variable length) containing a fixed number $k$ of occupied lots. This sequence appears on OEIS \cite{oeis} in two instances as \href{https://oeis.org/A052547}{A052547} with offset 3 and as \href{https://oeis.org/A096976}{A096976} with offset 5. There are three more related sequences: \href{https://oeis.org/A006053}{A006053}, \href{https://oeis.org/A028495}{A028495}, and \href{https://oeis.org/A096975}{A096975}, satisfying the same recurrence relation with different initial conditions. Each of these sequences is connected to the number of walks on the graph $P_3$ (the path graph over three nodes) with a loop added at one of the end nodes. This graph is represented in Figure \ref{fig:p3withloop} and we denote it by $\mathcal{P}$. The precise connection relating this graph with our sequence is given in the following theorem.

\begin{theorem}
	The number of maximal Riviera configurations containing exactly $k$ occupied lots is equal to the number of closed walks of length $k+4$ on the graph $\mathcal{P}$ which start and end at the node of degree $1$. There is a natural bijection relating these quantities.
\end{theorem}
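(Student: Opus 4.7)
My plan is to construct the natural bijection in two stages, building on the walk representation of maximal configurations given by Remark \ref{rem:1001}.

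In the forward direction, I will send a closed walk on $\mathcal{P}$ to a configuration by interpreting the node labels of Figure \ref{fig:LabeledP_3} as overlapping substrings. Given a closed walk $r = v_0, v_1, \ldots, v_{k+4} = r$ of length $k+4$ on $\mathcal{P}$, I will start with the label ``$1001$'' of $v_0$ and, at each transition $v_i \to v_{i+1}$, append to the current string all characters of the label of $v_{i+1}$ except its leading ``$1$'', which is identified with the trailing ``$1$'' of the current string. A short case analysis over the five edges of $\mathcal{P}$ produces the per-edge appendix: ``$1$'' for $r \to m$ and $\ell \to m$, ``$01$'' for $m \to \ell$ and $\ell \to \ell$, and ``$001$'' for $m \to r$. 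Each appendix contains exactly one house, so the resulting string has $2 + (k+4) = k + 6$ houses. I will then verify, by inspection of the first two and last two edges of the walk, that the string always has the form $100110 \, C \, 011001$; Remark \ref{rem:1001} then certifies that the middle block $C$ is a maximal Riviera configuration with exactly $k$ houses.

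In the inverse direction, starting from a maximal Riviera configuration $C$, I will apply Remark \ref{rem:1001} to view the extension $100110 \, C \, 011001$ as a walk on $\mathcal{G_R}$ from $100$ to $001$. Stripping the three forced edges at each end leaves a ``middle walk'' from $110$ to $011$. The key structural fact is that the nodes $100$, $001$, $010$, and $011$ each have a unique outgoing edge in $\mathcal{G_R}$, so the middle walk factors uniquely as an ordered sequence of cycles of two types: \emph{Big} cycles that pass through the outer nodes $100, 001$, and \emph{Small} cycles that pass through the inner node $101$ with $j \ge 0$ detours through $010$. Sending each Big cycle to an $r$-excursion $m \to r \to m$ and each Small cycle with $j$ detours to an $\ell$-excursion $m \to \ell \to \ell^j \to m$, strung between an initial $r \to m$ and a final $m \to r$, produces a closed walk on $\mathcal{P}$ whose length one confirms to be $k + 4$ by counting: each excursion contributes $2 + j$ edges and $2 + j$ houses to the middle walk, and summing gives total length $2 + (k+2) = k+4$.

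The main obstacle will be the unique cycle decomposition of the middle walk, which hinges on the forced outgoing transitions at the deterministic nodes noted above, together with the equally forced partition point between successive cycles (i.e., the visits to node $110$ before the final partial cycle ending at $011$). A minor subtlety concerns the case $k = 0$: the empty configuration matches the walk $r \to m \to r \to m \to r$, while the other length-$4$ walk $r \to m \to \ell \to m \to r$ produces the overlap-rule string $10011011001$ of length $11$, which is too short to factor as $100110 \, C \, 011001$ and hence has no preimage. This is consistent with the generating-function identity $\sum_{k \ge 0} w_{k+4}\, x^k = h(x) + 1$, where $w_n$ denotes the number of closed walks of length $n$ on $\mathcal{P}$ at the degree-one node $r$, providing a clean sanity check for the bijection.
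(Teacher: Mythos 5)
Your construction is correct and realizes the same bijection as the paper: both hinge on the extension $C \mapsto 100110\,C\,011001$ of Remark \ref{rem:1001} and on identifying the labels $1001$, $11$, $101$ of Figure \ref{fig:LabeledP_3} with the gaps between consecutive occupied lots, and both obtain the length count from the chain ``$k+6$ ones $\Rightarrow$ $k+5$ overlapping blocks $\Rightarrow$ $k+4$ transitions.'' Where you genuinely differ is in the configuration-to-walk direction: the paper reads the $\mathcal{P}$-walk directly off the string as its sequence of blocks from $\{11,101,1001\}$ and uses Lemma \ref{lemma_forbidden_words} to determine which consecutive pairs of blocks may occur, whereas you derive the walk from the unique factorization of the corresponding walk on $\mathcal{G_R}$ into Big and Small cycles based at $110$, exploiting the out-degree-one nodes $100$, $001$, $011$, $010$. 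Your route is more roundabout, but it buys something concrete: by making the walk-to-string map explicit edge by edge, you expose the boundary case $k=0$, where there are $w_4=2$ closed walks at $r$ but only the single empty configuration, because the walk $r\to m\to\ell\to m\to r$ yields the $11$-character string $10011011001$ whose forced prefix $100110$ and forced suffix $011001$ overlap. The paper's converse step (``each walk of length $k+4$ \ldots will necessarily produce a configuration of the form $100110c_1\dots c_n011001$'') silently assumes the string is long enough for the prefix and suffix to be disjoint, and this fails exactly here; so the theorem as stated should be read for $k\ge 1$, and your identity $\sum_{k\ge 0} w_{k+4}x^k = h(x)+1$ is the correct quantitative account of the discrepancy.
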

\begin{proof}
	From Lemma \ref{lemma_forbidden_words} we know that no three consecutive $0$'s are allowed in a maximal configuration. That means that each two neighboring $1$'s must be separated by zero, one or two $0$'s. This further means that, after ignoring leading and trailing $0$'s each maximal configuration can be identified by a sequence of strings in the set $\{11, 101, 1001\}$. We assume here that the last $1$ in one string overlaps with the first $1$ in the next. E.g.\ we would split the configuration $11011001101$ as $11$-$101$-$11$-$1001$-$11$-$101$.
	
	From Lemma \ref{lemma_forbidden_words} we also see that $11$ cannot be followed or preceded by $11$ (as this would produce $111$); $101$ and $1001$ cannot be followed or preceded by $1001$ (as this would produce $0100$ or $0010$). It is easy to see that the remaining transitions: $1001$-$11$ and $11$-$101$ going in either direction, and the loop at $101$ --- can all appear in a maximal configuration and are, thus, all allowed. These transitions are shown in the node-labeled graph $\mathcal{P}$ in Figure \ref{fig:LabeledP_3}. We use undirected edges as in each case the transitions going either way are allowed.
	
	Consider now the mapping which to each maximal Riviera configuration $c_1\dots c_n$ assigns the configuration $100110c_1\dots c_n011001$. By Remark \ref{rem:1001} we know that this map is a bijection from the set of all maximal Riviera configurations with exactly $k$ occupied lots to the set of configurations of the form $100110c_1\dots c_n011001$ which have exactly $k+6$ occupied lots. Those obtained configurations are not maximal but do not contain substrings forbidden by Lemma \ref{lemma_forbidden_words}.
	
	Now each of those configurations of the form $100110c_1\dots c_n011001$, where $c_1\dots c_n$ is a proper maximal configuration with $k$ occupied lots, can be represented as a walk of length $k+4$ on the graph $\mathcal{P}$ in Figure \ref{fig:LabeledP_3} which starts and ends at $1001$. Conversely, one easily checks (by inspecting all length $2$ walks) that a walk on this graph can never produce a configuration containing a substring which is forbidden by Lemma \ref{lemma_forbidden_words}. Therefore, each walk of length $k+4$ starting and ending at $1001$ will necessarily produce a configuration of the form $100110c_1\dots c_n011001$ which does not contain a substring forbidden by Lemma \ref{lemma_forbidden_words} and has $k+6$ $1$'s. By Remark \ref{rem:1001}, the word $c_1\dots c_n$ will be a proper maximal configuration with exactly $k$ $1$'s.
	
	Putting everything together gives us the required bijection. A maximal Riviera configuration $c_1 \dots c_n$ containing exactly $k$ occupied lots is written as the string $100110c_1\dots c_n 011001$, which is then represented as a walk of length $k+4$ over the graph $\mathcal{P}$. As an example, the maximal configuration 10110 is mapped to $10110 \to \textcolor{blue}{100110}|10110|\textcolor{blue}{011001}$ which corresponds to the walk: $1001 \rightarrow 11 \rightarrow 101 \rightarrow \text{101} \rightarrow \text{11}
	\rightarrow \text{1001} \rightarrow \text{11} \rightarrow \text{1001}$ which begins and ends with the node $1001$.
\end{proof}

\section{Multi-story models}\label{sec:multi-story}
A natural generalization of the Riviera model is an analogous model where the lots are occupied with houses consisting of multiple stories. The configurations of houses in this case are represented with a row vector $C=(c_1,\dots,c_n) \in \mathbb{N}_0^n$, where $c_i$ is the number of stories that the house on the lot $i$ has, and $c_i = 0$ represents an empty lot, as before.

For the sake of simplicity, the assumption is that the sunlight falls at the angle of $45$ degrees and that each floor of every house is a perfect cube, see Figure \ref{fig:sjene}. This assumption requires the building configurations to spread out more so that the lower stories can obtain sunlight and not be blocked by other houses. Thus, the following definition arises: A building of $k$ stories positioned on the lot $i$ blocks sunlight, from one side, to each lot $ i-k \leq j \leq i+k$ up to the height of $k - \aps{i - j} + 1$ stories.

\begin{figure}
	\includegraphics{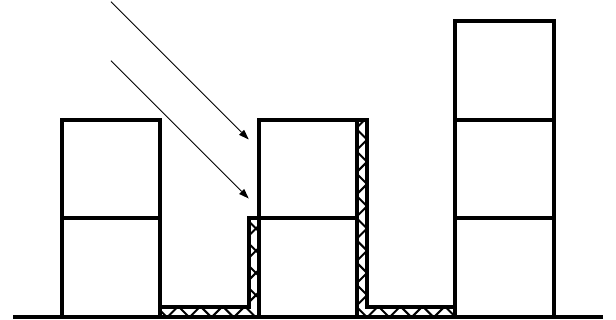}
	\caption{Both stories of the middle house are blocked from the sunlight from the east, while only the first story is blocked from the west.}\label{fig:sjene}
\end{figure}

The permissibility constraint in such models can be concisely stated in the following way: Every story of each house must obtain sunlight from at least one side (east or west) during the day, namely, not to be blocked from sunlight from both sides by other buildings. Naturally, we are again interested in maximal configurations. The maximality of the configuration $C \in \mathbb{N}_0^n$ would mean that for each lot $i$, $1\leq i \leq n$, no additional stories can be built on it. In other words, if an addition of a single story to any lot in $C$ would still result in a permissible configuration, then it is not maximal. 

In this section we also comment a logical counterpart of a multi-story Riviera model, namely a multi-story Flory model. As already stated, in the Riviera model we require that each story obtains sunlight from the east \textbf{or} the west.  However, one could alternatively require that each story obtains sunlight from the east \textbf{and} from the west. In this case, we would obtain models of Flory type, which in the single-story case comes down to the classical Flory model introduced in \cite{Flory}. We comment on both of these variants, which we can refer to as the \textbf{AND} and the \textbf{OR} variant.

\subsection{OR variant}
Our intention here is again to count the number of maximal configurations. To this end, we apply the transfer matrix method, for which we are required to construct a digraph which allows us to encode our configurations in terms of walks on that graph. It is somewhat obvious that, on this level of generality, this is impossible, due to the fact that the minimal set of impermissible substrings of maximal configurations cannot be reduced to a finite set. However, by fixing a maximal number of stories allowed, we denote it with $k$, we are still in the domain of transfer matrix method. 

Once we fix $k$, in order to construct a digraph, we must know the minimal length of the nodal strings which would guarantee the permissibility and maximality of the configurations associated with walks. Next, we must find all of the possible substrings of that length which occur in maximal configurations. One way of doing this is by an exhaustive search among all the maximal configurations until one is sure that all the substrings have been found. 

This, however, is quite involved, which can be deduced from the following two lemmas.

\begin{lemma}\label{lm:4k+1}
Let $k \in \mathbb{N}$ be the maximal number of stories allowed. It is enough to take substrings of length $4k + 1$ to form nodes of the graph $\mathcal{G}=(\mathcal{V},\mathcal{E})$ (which can be constructed analogously as in Algorithm \ref{alg:digrafPerm}) to be certain that this graph $\mathcal{G}$ can be used in the transfer matrix method for counting the corresponding maximal configurations.
\end{lemma}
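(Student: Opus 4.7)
The plan is to establish that maximality of a configuration can be verified locally by inspecting sliding windows of length $4k+1$. Once this is in place, the standard construction (in direct analogy with Algorithm \ref{alg:digrafPerm}) applies: the vertices of $\mathcal{G}$ are taken to be the allowed $(4k+1)$-letter strings over the alphabet $\{0,1,\dots,k\}$, with directed edges encoding progressive overlap by one letter, and walks of appropriate length on $\mathcal{G}$ correspond bijectively to maximal configurations.

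First I would verify that permissibility at a single lot $i$ depends only on the entries $c_j$ with $|i-j|\le k$. This is essentially immediate from the shadow rule together with the height cap $c_j\le k$: a building of height $h\le k$ at lot $j$ blocks sunlight at lots at most $h$ positions away, so no lot outside $[i-k,i+k]$ can cast any shadow onto lot $i$. Consequently, the permissibility status of every story at lot $i$ is captured by inspecting any window of length $2k+1$ centered at $i$.

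The main step, and the one demanding care, is to bound the window needed for the maximality check. Maximality at lot $i$ (with $c_i<k$) means that the configuration $C^{(i)}$, obtained from $C$ by adding one story at $i$, is impermissible. Since $C$ is already permissible, any unlit story in $C^{(i)}$ must be either the freshly added top story at $i$ itself, or an existing story at some lot $j\neq i$ that was lit only from the side which is now additionally shaded by the taller building at $i$. The enlarged building has height $c_i+1\le k$, so its shadow reaches only lots $j$ with $|i-j|\le c_i+1\le k$. By the first part, permissibility at any such $j$ is a condition on the heights $c_{j'}$ with $j'\in[j-k,j+k]$; taking the union over the relevant $j$ gives the window $[i-2k,i+2k]$, of length exactly $4k+1$.

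Combining the two halves, the property ``$C$ is maximal'' is verifiable at every lot $i$ by looking only at the heights in $[i-2k,i+2k]$. This is precisely the local verifiability the transfer matrix method requires, so taking substrings of length $4k+1$ as the nodes of $\mathcal{G}$ is sufficient. The only real obstacle is the bookkeeping in the second step---carefully tracing which existing stories could become unlit once a single extra story is placed at $i$---while the boundary lots are handled by the usual padding with zeros, exactly as in Remark \ref{rem:padding}. I would not attempt to argue optimality of $4k+1$, only sufficiency, which is all the lemma claims.
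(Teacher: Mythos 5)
Your proposal is correct and follows essentially the same route as the paper: both arguments decompose the check at a lot $i$ into (i) whether the added story at $i$ is itself lit, which depends only on lots within distance $k$, and (ii) whether the added story shades an existing story at some lot $j$ with $|i-j|\le k$, whose remaining illumination in turn depends only on lots within distance $k$ of $j$, giving the total radius $2k$ and hence windows of length $4k+1$. The paper phrases this as a worst-case discussion of $k$-story neighbors while you phrase it as a union of windows, but the content is identical.
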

\begin{proof}
As already mentioned, applicability of the transfer matrix method relies on the fact that permissibility, as well as maximality of a configuration, can be verified by inspecting only finite size patches of a given configuration. More precisely, if one wants to check whether a state of a certain lot (occupied or unoccupied) has caused the configuration to be impermissible or not maximal, one only needs to check the situation on the lots in a certain finite radius of the observed lot, where that radius is uniform for each lot on the tract of land. What we are claiming in this lemma is that, when the maximal number of stories allowed is $k$, than this radius is $2k$, i.e.\ it is enough to check the $2k$ immediate neighbors to the west of a lot and the $2k$ immediate neighbors to the east of a lot to be certain whether this central lot violates permissibility or maximality.

Let us first discuss what needs to be checked to be sure that building an additional story on that central lot will not violate permissibility and then we will discuss what needs to be checked to ensure that not building an additional story on this lot will not violate maximality. There are two ways in which permissibility can be violated. On one hand, it can happen that the built story itself will not be exposed to sunlight and on the other hand, the built story can block a story of another house from the sunlight. The radius that needs to be checked is the biggest in the case when we want to build as many stories as allowed, so we assume from now that we want to build a $k$-story house and that its first neighbors to the east and to the west are also $k$-story houses. To check whether the central lot is already blocked from the sunlight, we only need to check $k$ lots to the east and $k$ lots to the west. Notice now that if the first neighbor to the east of the central lot is more than $k$ empty lots away, then this neighbor is definitely getting sunlight from the west and building a house on a central lot will not block the sunlight to that neighboring house. If that neighbor is less than $k$ empty lots away, then building a $k$-story house on the central lot would block the sunlight from the west at least to the first story of the mentioned neighbor. In that case, the question is whether this first neighbor to the east is exposed to sunlight from the east. To verify that, we again need to check what happens on $k$ immediate neighboring lots to the east of it. Hence, the worst-case scenario is that we need to check $2k$ lots to the east of the central lot. Due to the symmetry, the same holds while checking lots to the west of the central one.

It is now easy to see that applying the same logic and checking $2k$ lots immediately to the east and immediately to the west of a central lot is enough to verify whether we can build additional stories on that central lot and in this way we check maximality.
\end{proof}

\begin{lemma}\label{lm:8k+1}
	Let $k \in \mathbb{N}$ be the maximal number of stories allowed. Then the maximal configurations of length $8k + 1$ contain as substrings all the strings of length $4k + 1$ that can appear in any maximal configuration of arbitrary length.
	
	Moreover, if such a string is at the beginning/end of a maximal configuration then it is possible to choose a maximal configuration of length $8k+1$ in which it also appears at the beginning/end.
\end{lemma}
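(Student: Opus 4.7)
The plan is to prove the lemma by explicit construction: starting from any maximal configuration $C$ that contains $s$, extract a window of length $8k+1$ centered on $s$, and then repair the boundary so that the result is itself a maximal configuration of length $8k+1$. Concretely, let $s$ occur at positions $i+1, \ldots, i+4k+1$ of $C = c_1 \ldots c_n$, and first treat the interior case $i \geq 2k$, $n - i - 4k - 1 \geq 2k$. Define the window $W = c_{i-2k+1} \ldots c_{i+6k+1}$, so that $s$ occupies the middle positions $2k+1, \ldots, 6k+1$ of $W$. By Lemma~\ref{lm:4k+1}, every position $j \in [2k+1, 6k+1]$ of $W$ has its $2k$-neighborhood contained in $W$ with the same content as in $C$, so all local permissibility and maximality constraints at positions of $s$ are inherited from $C$ into $W$.

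Next, I would repair the outer $2k$ positions on each side of $W$, which can fail maximality as standalone because their original $2k$-neighborhoods in $C$ extended beyond $W$. For this I would apply a greedy procedure: while some outer lot admits the addition of a further story (not exceeding the maximum height $k$) without violating the permissibility of $W$, add one such story. The process terminates after finitely many additions and produces a configuration $W'$ of length $8k+1$ whose central $4k+1$ positions are still exactly $s$.

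The verification that $W'$ is maximal then splits into three pieces. Permissibility holds by construction; no outer lot of $W'$ is buildable by termination of the greedy step; and no empty inner lot $j \in [2k+1, 6k+1]$ is buildable because of a monotonicity argument. The point is that $W'$ arises from $W$ by adding stories only in the outer regions, so every shadow pattern cast on an inner lot in $W'$ dominates the corresponding pattern in $W$. Since adding a story at $j$ in $W$ violates permissibility (by the maximality of $s$ inherited from $C$), the violating blocked-from-both-sides story persists as blocked in the stronger-shadow environment of $W'$, and the same addition at $j$ still fails permissibility in $W'$.

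Finally, for the boundary cases where $s$ is too close to an end of $C$, I would place $s$ at the corresponding boundary of the new length-$(8k+1)$ window, fill the opposite side with whatever context $C$ does provide (padding with empty lots if $C$ itself is too short), and then run the same greedy procedure. This simultaneously secures the \emph{moreover} part of the statement, yielding a maximal configuration of length $8k+1$ with $s$ placed at the beginning or end, as required. The hard part will be making the monotonicity argument in the third paragraph fully rigorous: one must check case by case that the two distinct ways an addition at $j$ can fail permissibility in $W$ — namely, some story of the newly added building being itself blocked from both sides, or the newly added building shadowing a neighboring story within distance $k$ into impermissibility — are both preserved under the extra stories added in the outer regions when passing from $W$ to $W'$.
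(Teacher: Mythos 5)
Your interior case is sound and is essentially the paper's argument: the paper extracts the substring together with its $2k$-context on each side, then updates the $4k$ outer lots one by one (in a fixed order, each to the largest permissible value of the monotone local function $f$), and uses monotonicity of $f$ to conclude that both the outer and the inner lots end up non-buildable. Your greedy ``while some outer lot is buildable, add a story'' loop is an acceptable substitute for that ordered sweep, and your monotonicity argument for the inner lots is the same as the paper's. (One small slip: maximality requires that \emph{no} inner lot admits an additional story, not only the empty ones; your argument covers this anyway.)

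The genuine gap is in your boundary case. When $s$ sits at distance $0<i<2k$ from an end of $C$, you propose to place $s$ flush against the end of the new window, thereby \emph{discarding} the $i$ lots of $C$ between $s$ and the boundary. This changes the $2k$-neighborhoods of the lots of $s$ near that end: in $C$ they saw $0^{2k-i}c_1\dots c_i$ on that side, while in your window they see $0^{2k}$. By monotonicity this can strictly increase the number of stories permitted there, so a lot \emph{inside} $s$ becomes buildable, and your greedy repair (which touches only outer lots) cannot restore maximality without destroying $s$. Concretely, for $k=1$ take the maximal configuration $C=1010110$ and $s=c_2\dots c_6=01011$; your construction starts from $010110000$ and the greedy step yields $010110110$ (or $010110011$), in which position $1$ carries the forbidden decorated word $0\underline{0}10$ --- a house can still be added there, so the result is not maximal. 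The paper avoids this by always extracting the \emph{actual} $2k$-context of $s$ from $C$ on both sides, padding with zeros only beyond the ends of $C$ (which is consistent with how maximality of $C$ is defined via zero-padding), and repairing only those $4k$ genuinely outer lots; for the ``moreover'' part, where $s$ is exactly at an end, it places all $4k$ spare lots on the far side. Your proof needs the intermediate case $0<i<2k$ handled this way rather than by truncation.
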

\begin{remark}
	Notice that Lemma \ref{lm:8k+1} tells us that if we find all the maximal configurations of length $8k + 1$ (which can be done by exhaustive search using a computer), and then go through those maximal configurations with the window of size $4k + 1$, we will find all the vertices that we need for the graph $\mathcal{G}$. Moreover, using these maximal configurations of length $8k + 1$ we can easily extract the vertices with which the maximal configuration can start and end (see Algorithm \ref{alg:cap}).
\end{remark}
\begin{proof}[Proof of Lemma \ref{lm:8k+1}]
	It is clear from Lemma \ref{lm:4k+1} that the state of a certain lot in a maximal configuration, padded with zeros, is a function of the states of $2k$ lots immediately to the east and $2k$ lots immediately to the west of it. More precisely, the state $s$ of that lot is the largest number $0\le s \le k$ which keeps the configuration permissible. This means that it is impossible to have two nodes in the graph $\mathcal{G}$ such that they only differ in the state of the central lot.
	
	The above mentioned function $f({u_1\dots u_{2k}},{v_1\dots v_{2k}})$, which to each pair of permissible words of length $2k$ assigns the largest number of stories that could be built on the central lot of a configuration obtained by concatenating the word $u_1\dots u_{2k}$ to the left and the word $v_1\dots v_{2k}$ to the right of that central lot whilst not violating permissibility, is monotonic in the sense that if $u_i\le u_i'$ and $v_i\le v_i'$ for all $1\le i\le 2k$ then $f({u_1\dots u_{2k}},{v_1\dots v_{2k}}) \ge f({u_1'\dots u_{2k}'},{v_1'\dots v_{2k}'})$.
	
	What we need to show here is that every substring of length $4k + 1$ that can appear in some maximal configuration of arbitrary length, also appears in some maximal configuration of length $8k + 1$. We prove this by showing that any substring of length $4k + 1$ that can appear in a maximal configuration can be expanded to a maximal configuration by adding additional $2k$ lots to the east and to the west of it and building additional houses only on those $4k$ added lots.
	
	As announced, we start from a substring $u_1 u_2 \dots u_{4k+1}$ of length $4k + 1$ which appears in some maximal configuration. We then extract this substring along with $2k$ lots to the east of that substring within this maximal configuration in which it appears and also $2k$ lots to the west of it, padding with zeros when necessary. This gives us a string $u_{-2k+1} \dots u_0 u_1 \dots u_{4k+1} \dots u_{6k+1}$ of length $8k + 1$ which we further pad with zeros to the left and right to obtain an infinite word $(u_i)_{i\in\mathbb{Z}}$.
	
	Note that the maximality of the initial configuration in which the substring $u_1 u_2 \dots u_{4k+1}$ appeared implies that the relation $u_i=f(u_{i-2k}\dots u_{i-1},u_{i+1}\dots u_{i+2k})$ holds for any $1\le i\le 4k+1$.
	
	We now start changing the entries $u_0, u_{-1}, \dots , u_{-2k+1}$, and $u_{4k+2}, u_{4k+3}, \dots , u_{6k+1}$ one-by-one, in this order, according to the output of $f$ on the current state of the $2k$ lots to the left and right of the observed block. We first change $u_0$ to $u_0'=f({u_{-2k}\dots u_{-1}}, {u_1 u_2 \dots u_{2k}})$, then $u_{-1}$ to $u_{-1}'=f({u_{-2k-1}\dots u_{-2}}, {u_0' u_1 \dots u_{2k-1}})$ and so on. Continuing in this way, we will construct the configuration $$u_{-2k+1}'\dots u_{0}' u_1 u_2 \dots u_{4k+1} u_{4k+2}' \dots u_{6k+1}'$$ of length $8k+1$ which, we claim, is maximal.
	
	Observe that since we have started with a permissible configuration, the updates provided  by $f$ can only ever increase the number of the stories already present at the inspected lot. Also note that the configuration we obtain after performing these updates is also permissible as the updates provided by the function $f$ never violate permissibility.
	
	Now, to verify maximality note that, by the definition of function $f$, the number $u_{i}'$, where $-2k+1\le i\le 0$ or $4k+2\le i\le 6k+1$, was as large as permitted at the time the update $u_i \mapsto u_i'$ was performed. Taking into account the monotonicity of $f$, it is clear that $u_i'$ is as large as possible in the final configuration obtained. Finally note that, again, because of the monotonicity of $f$, $u_i$ for $1\le i\le 4k+1$ is still as large as possible.
	
	To see that the `moreover' part of the statement is true, note that an analogous procedure can be followed if the initial substring appears at the beginning/end. In this case we extract the substring from its maximal configuration along with $4k$ additional lots to its right/left and then pad with zeros. The updates via $f$ are again performed only on these extra lots and similar arguments show that the obtained configuration of length $8k+1$ which starts/end with the chosen substring is maximal.
\end{proof}

Now we propose an algorithm (see Algorithm \ref{alg:cap}) for calculating the $(k+1)$-variate generating function related to a Riviera model where it is possible to have houses with different number of stories, but the maximal number of stories allowed is a fixed number $k \in \mathbb{N}$. Variables $x_r$, $r \in \{1, 2, \ldots, k\}$, are formal variables associated with the number of $r$-story houses in the maximal configuration and $y$ is a formal variable associated with the length of a maximal configuration. The proposed algorithm is written in a very general way and it works even in situations where all the $r$-story houses appear ($r \in \{1, 2, \ldots, k\}$), but in the case when we allow only $k$-story houses for one fixed $k \in \mathbb{N}$, we use slightly simpler notation than the one in Algorithm \ref{alg:cap}.
\begin{algorithm}
	\caption{The calculation of the $(k +1)$-variate generating function}\label{alg:cap}
	\begin{algorithmic}
	    \State MaximalConfigurations $=$ all the maximal configurations of length $8k + 1$
	    \State \Comment{this is found by an exhaustive search}
		\State AllowedNodes $=\emptyset$, StartNodes $=\emptyset$, EndNodes $=\emptyset$
		\For{mc $\in$ MaximalConfigurations}
			\State add substring(mc, start $ = 1$, end $ = 4k + 1$) to StartNodes
			\State add substring(mc, start $ = 4k + 1$, end $ = 8k + 1$) to EndNodes
			\For{$1 \le i \le 4k + 1$}
			    \State add substring(mc, start $ = i$, end $ = 4k + i$) to AllowedNodes
			\EndFor
		\EndFor
		\\
		\State $n = $ length(AllowedNodes)
		\State $A = $ null matrix(nrow $ = n$, ncol $ = n$)
		\For {$1 \le i \le n$}
		    \For {$1 \le j \le n$}
		        \If {substring(AllowedNodes$[i]$, start $ = 2$, end $ = 4k + 1$) $ = $ $\quad\backslash\backslash$\\ substring(AllowedNodes$[j]$, start $ = 1$, end $ = 4k$)}
		        \State $r = $ substring(AllowedNodes[j], start $ = 4k + 1$, end $ = 4k + 1$)
		        \If{$r = 0$}
		            \State $A[i, j] = 1$
                \Else
                    \State $A[i, j] = x_r$
                \EndIf
		        \EndIf
		    \EndFor
        \EndFor
		\\
		\State $a = $ null vector(length $ = n$),  $b = $ null vector(length $ = n$)
		\For{$1 \le i \le n$}
		    \If{AllowedNodes$[i] \in $ StartNodes}
		        \State $a[i] = 1$
		        \For{$1 \le r \le k$}
		            \State $c = $ count($r$ in AllowedNodes$[i]$)
		            \State $a[i] = a[i] \cdot x_r^c$
		        \EndFor
		    \EndIf
		    \If{AllowedNodes$[i] \in $ EndNodes}
		        \State $b[i] = 1$
            \EndIf
		\EndFor
		\\
		\State $a_{c_1, c_2, \ldots, c_k, j} = $ the number of maximal configurations of length $j$ with precisely $c_r$ $r$-story houses \Comment{this is found by an exhaustive search for $0 \le j \le 4k$}
		\State $F_1(x_1, x_2, \ldots, x_k, y) = \sum_{j = 0}^{4k} \sum_{c_1, c_2, \ldots, c_k = 0}^{j} a_{c_1, c_2, \ldots, c_k, j} \prod_{r = 1}^{k} x_r^{c_r}y^j$
		\State $F_2(x_1, x_2, \ldots, x_k, y) = \sum_{j = 4k + 1}^{\infty} a^T \cdot A^{j - 4k} \cdot b \cdot y^j = a^T\cdot(I-Ay)^{-1}\cdot A\cdot b\cdot y^{4k+1}$
		\\
		\State\Return $F(x_1, x_2, \ldots, x_k, y) = F_1(x_1, x_2, \ldots, x_k, y) + F_2(x_1, x_2, \ldots, x_k, y)$
	\end{algorithmic}
\end{algorithm}

\subsubsection{Two-story Riviera model}

In the two-story Riviera model, all the houses on our $1 \times n$ tract of land have precisely two stories. This means that, to ensure permissibility of the configuration, each house needs to have at least two empty lots immediately to the east or at least two empty lots immediately to the west of itself (we again assume that there is no obstruction to sunlight on the east and west boundary of the tract of land). Maximal configurations are, as before, those that are permissible, but become impermissible as soon as we add one more house on any of the empty lots. Representing configurations of length $n$ with row vectors $C=(c_1,\dots, c_n) \in \mathbb{N}_0^n$ where $c_i = 2$ denotes that on the lot $i$ ($i \in \{1, 2, \ldots, n\}$) there is a house with $2$ stories (and $c_i = 0$ denotes that the lot $i$ is not occupied by a house), one example of maximal configuration in the two-story Riviera model is $0202000022$. It is easy to check that each of the two stories of every house in this configuration is exposed to sunlight from at least one side (east or west), and that adding another two-story house on any of the empty lots would turn this configuration into an impermissible one. The goal now is to repeat the same procedure as for the original Riviera model and to obtain bivariate generating function which will give us information not only about the number of maximal configurations with a fixed length, but also about the number of maximal configurations with a fixed number of houses. Using Algorithm \ref{alg:cap}, we can construct the digraph that enables us to encode maximal configurations with walks on that digraph.  Analogously as in Subsection \ref{subsec:gen_fun_for_Riviera} we define the matrix function $A(x)$, which is not only the adjacency matrix of the mentioned digraph, but additionally encodes whether a transition from one node of the graph to another one results in the increase of the total number of occupied lots. Due to the dimension of the matrix $A(x)$ and the technicalities in developing bivariate generating function, we omit the details. The precise shape of the generating function $F(x, y)$ (given in Appendix --- see \eqref{it:two-story_Riviera}) was obtained following Algorithm \ref{alg:cap} and using R programming language \cite{Rcite} for creating the necessary objects (matrix $A$, vectors $a$ and $b$) and Maxima software to perform the calculation of the generating function using objects created in R. The first few coefficients in the expansion of $F(x,y)$ are given in Table \ref{tab:gf2}. By inspecting the non-zero coefficients in the table, we see that the ratio $\frac{k}{n}$ is in-between $\frac{2}{7}$ and $\frac{1}{2}$, for large $n$.

\begin{table}[!h]
	\caption{The first few coefficients in the expansion of the bivariate generating function $F(x,y)$.}\label{tab:gf2}
	\small
	\begin{tabular}{c|cccccccccccccccc}
		$k\backslash n$ & $1$ & $y$ & $y^2$ & $y^3$ & $y^4$ & $y^5$ & $y^6$ & $y^7$ & $y^8$ & $y^9$ & $y^{10}$ & $y^{11}$ & $y^{12}$ & $y^{13}$ & $y^{14}$ & $y^{15}$\\\hline
		$1$      & $1$ &     &     &     &     &     &     &     &      &      &      &      &      &      &      &      \\
		$x$      &     & $1$ &     &     &     &     &     &     &      &      &      &      &      &      &      &      \\
		$x^2$    &     &     & $1$ & $3$ & $6$ & $5$ & $3$ & $1$ &      &      &      &      &      &      &      &      \\
		$x^3$    &     &     &     &     &     & $2$ & $4$ & $5$ & $2$  &      &      &      &      &      &      &      \\
		$x^4$    &     &     &     &     &     &     & $1$ & $5$ & $16$ & $27$ & $31$ & $24$ & $13$ & $5$  &  $1$ &      \\
		$x^5$    &     &     &     &     &     &     &     &     &      & $3$  & $12$ & $28$ & $36$ & $29$ &  $14$& $3$  \\
		$x^6$    &     &     &     &     &     &     &     &     &      &      & $1$  & $7$  & $31$ & $80$ & $142$& $177$\\
		$x^7$    &     &     &     &     &     &     &     &     &      &      &      &      &      & $4$  & $24$ & $82$ \\
	\end{tabular}
\end{table}

From Table \ref{tab:gf2} we read that there are $44$ maximal configurations of length $10$, out of which $31$ configurations have precisely $4$ two-story houses built on them. This means that, apart from the maximal configuration that we gave as an example ($0202000022$), there are $43$ more maximal configurations with the same length and $30$ more maximal configurations with the same length and the same number of houses.

The first several values of the integer sequence that counts the number of maximal configurations in the two-story Riviera model of length $n$ can be read as column sums: $1, 1, 3, 6, 7, 8, 11, 18, 30, 44, \dots$ and this sequence is still not a part of the On-Line Encyclopedia of Integer Sequences. On the other hand, by expanding $F(x, y)$ into the formal power series in powers of $x$, (i.e.\ taking row sums) we can see that, beside the mentioned maximal configuration with precisely $4$ two-story houses, there are $122$ more maximal configurations with the same number of houses. The first several values of the integer sequence that counts the number of maximal configurations in the two-story Riviera model with precisely $k$ houses can be read as row sums: $1, 19, 13, 123, 125, 811, 1069, 5435, 8605, 36939, \dots$. This sequence is also not yet included in the OEIS.

By performing the same analysis as in the case of one-story houses, one can
obtain the expected number of occupied lots in a maximal configuration of
length $n$ as $\langle k_2(n) \rangle = 0.388957 \,\,n $, and the 
expected length of a maximal configuration with $k$ two-story houses as
$\langle n_2(k)\rangle = 2.706054 \,\,k$. Since the largest possible number of
buildings in a maximal configuration is, roughly, $n/2$, one can see that the
expected efficiency $\varepsilon _2$ is lower than in the case of one-story
buildings: $\varepsilon _2 = 0.777914$. However, by counting each house as two
basic units (two flats), it follows that on a maximal configuration of the same
length $n$ one can build, on average, $2 \cdot 0.388957 / 0.577203 = 1.34773$
times more basic units by opting for two-story houses. It is reassuring that
our model, although very simple, captures the common wisdom of planners and
builders. It would be interesting to investigate how this gain in 
efficiency is affected by further increase in the number of stories.

\subsubsection{Mixed Riviera model}
By the mixed Riviera model, we refer to the model where one-story houses and two-story houses can be built on the same $1 \times n$ tract of land. Notice that, even though we cannot add additional two-story house on any of the empty lots in the configuration $0202000022$, we could add one-story house without making the configuration impermissible. One possible position to add one-story house to this configuration is $i = 6$. If we set $c_6 = 1$ we get the configuration $0202010022$. This one-story house is not blocking the sun to any of the stories of already built houses, and the one-story house itself is exposed to sunlight. It is easy to check that this new configuration with one-story house is maximal since we cannot add additional stories to any of the empty lots nor to the lot where there is a one-story house. In this model, words that represent configurations have three letters ($0$, $1$ and $2$). However, the transfer matrix method and Algorithm \ref{alg:cap} can again be used to obtain the digraph through which we encode all the maximal configurations in this mixed Riviera model. Once we have the digraph, we have the adjacency matrix, but as before, we use this adjacency matrix for more than just counting the number of maximal configurations of fixed length. Depending on whether a transition from node $i$ to node $j$ results in addition of an empty lot, a lot on which one-story house is built or a lot on which two-story house is built, matrix function $A$ has $1$, $x$ or $y$ on the position $(i, j)$, respectively. Hence, $A$ is a function of two variables ($x$ and $y$) where $x$, rather than $x_1$, is a formal variable associated with the number of one-story houses in the configuration and $y$, rather than $x_2$, is a formal variable associated with the number of two-story houses in the configuration. Denoting the formal variable associated with the length of the configuration by $z$, rather than $y$, and using similar calculations as in Subsection \ref{subsec:gen_fun_for_Riviera} we obtain trivariate generating function. The precise shape of this generating function is again obtained by R and Maxima and it is given in Appendix (see \eqref{it:mixed_Riviera}). By expanding $F(x, y, z)$ into the formal power series in powers of $z$, we get
{\small \begin{align*}
    F & (x, y, z) = 1 \\
    & + yz \\
    & + y^2z^2 \\
    & + 3y^2z^3 \\
    & + (2x + 4)y^2z^4 \\
    & + (2y^3 + (x^2 + 6x + 1)y^2)z^5 \\
    & + (y^4 + 4y^3 + (7x^2 + 6x)y^2)z^6 \\
    & + (5y^4 + (2x^2 + 2x + 3)y^3 + (2x^3 + 11x^2 + 2x)y^2)z^7 \\
    & + ((4x + 13)y^4 + (10x^2 + 4x)y^3 + (x^4 + 8x^3 + 5x^2)y^2)z^8 \\
    & + (3y^5 + (3x^2 + 18x + 15)y^4 + (4x^3 + 18x^2 + 2x)y^3 + (7x^4 + 8x^3 + x^2)y^2)z^9 \\
    & + (y^6 + 12y^5 + (30x^2 + 34x + 8)y^4 + (4x^4 + 16x^3 + 10x^2)y^3 + (2x^5 + 11x^4 + 2x^3)y^2)z^{10} \\
    & + \dots
\end{align*}}%
Our example of a maximal configuration in the mixed Riviera model has length $10$, $1$ one-story house and $4$ two-story houses. From this expansion it is easy to read that there are $33$ more maximal configurations of length $10$ with $1$ one-story house and $4$ two-story houses. Notice that plugging $y = x$ in the trivariate generating function $F(x, y, z)$ gives bivariate generating function in variables $x$ and $z$ where formal variable $x$ is associated with the number of houses (regardless of the number of stories) and formal variable $z$ is still associated with the length of the maximal configuration. Similarly, setting $y = x^2$ we get bivariate generating function where formal variable $x$ is associated with the number of stories on maximal configurations. The most natural sequence related to this model is the one that counts the number of maximal configurations of length $n$. The first several values of this sequence are $1, 1, 3, 6, 10, 18, 27, 45, 79, 130, \dots$. This sequence is not found on the OEIS.

\subsection{AND variant}

As explained at the beginning of this section, the AND variant, unlike the OR variant, requires that each story of each house gets sunlight from the east and from the west, and in the one-story case this comes down to the classical Flory polymer model introduced by Flory \cite{Flory} already in 1939. The sequence $(a_n)$ that counts the number of maximal configurations of length $n$ in the one-story Flory model is the famous Padovan sequence (see \href{https://oeis.org/A000931}{A000931}) with offset $6$ or, equivalently, the number of compositions (ordered partitions) of number $n+3$ into parts $2$ and $3$ (i.e.\ \href{https://oeis.org/A182097}{A182097} with offset $3$). Moreover, if we are interested in the asymptotic formula for $a_n$, we can obtain it as explained in Remark \ref{rem:PF-Riviera}. It holds that $a_n \sim C\,\lambda^n$ as $n$ tends to infinity where $\lambda$ is the well-known plastic number $\lambda = \frac{1}{w} = 1.324718$. The numerical value of $C=\frac{\lambda^3+\lambda^2+\lambda}{2\lambda+3}=0.956611$ can, as before, determined using Theorem \ref{tm:asymp}. In the next two subsections we discuss variants of Flory model where we can have houses that have more than one story.

\begin{remark}
    Note that there are more maximal configurations in the Riviera model than in the Flory model since the Perron-Frobenius eigenvalue in the Riviera model is bigger than the plastic number (which is the Perron-Frobenius eigenvalue in the Flory model). It is also interesting to compare these constants with $2$ since there are $2^n$ binary sequences of length $n$ when we do not impose any restrictions on those sequences.
\end{remark}

\subsubsection{Multi-story Flory models}
Using the same technique as in the last two subsections, we could easily obtain generating functions related to the multi-story Flory models, but sequences that count the number of maximal configurations (in multi-story Flory models) with fixed length or with fixed number of houses are some well-known sequences. Therefore, we will just relate our sequences to those already known by establishing the appropriate bijections.

The simplest way to explain the connection of the sequences that arise from the multi-story Flory models and some already known sequences is by using an example. Let us consider the two-story Flory model. In this model, all the houses have precisely two stories and each story of each house needs to get sunlight from both east and west (we assume that there is no obstruction to sunlight on the east and west boundary of the tract of land). This implies that each house needs to have at least two empty lots immediately to the east and immediately to the west of it (except those houses that are near the boundary). Let us have a look at all the maximal configurations of length $6$. Those are $200200, 200020, 020020, 200002, 020002$ and $002002$. Clearly, between each two houses we can have $2, 3$ or $4$ empty lots. Less than $2$ empty lots would mean that those houses are not getting the sunlight from both east and west (hence, the permissibility would be violated) and more than $4$ empty lots would mean that we can add additional house between those two houses without violating permissibility (hence, the maximality would be violated). This implies that if we take a block of lots that includes all the empty lots to the west of the house and the house itself, this block can have length $3, 4$ or $5$. This is true for all the houses except maybe the first one since this one can get the sunlight from outside of the tract of land. For the first house this block including all the empty lots to the west of it and the first house itself can be of size $1$, $2$ or $3$. For that reason, we artificially add $2$ empty lots in front of our maximal configurations. Also, if the configuration doesn't end with a house, empty lots at the end of the configuration will not be a part of any block. To solve this problem, we artificially add $002$ at the end of each maximal configuration since in this way we will create a block at the end that has length $3$, $4$ or $5$. After adding these $5$ additional lots, we ended up with strings of length $11$. Each of these strings is different and they can be divided into blocks that contain one house and all the empty lots preceding that house. As explained, these blocks will be of length $3$, $4$ and $5$ and hence will give as a representation of number $11$ as a sum of numbers $3$, $4$ and $5$. In our example we have
\begin{align*}
    200200 & \to \textcolor{red}{00}|200200|\textcolor{red}{002} \to \underbrace{002}_3\underbrace{002}_3\underbrace{00002}_5 \\
    200020 & \to \textcolor{red}{00}|200020|\textcolor{red}{002} \to \underbrace{002}_3\underbrace{0002}_4\underbrace{0002}_4 \\
    020020 & \to \textcolor{red}{00}|020020|\textcolor{red}{002} \to \underbrace{0002}_4\underbrace{002}_3\underbrace{0002}_4 \\
    200002 & \to \textcolor{red}{00}|200002|\textcolor{red}{002} \to \underbrace{002}_3\underbrace{00002}_5\underbrace{002}_3 \\
    020002 & \to \textcolor{red}{00}|020002|\textcolor{red}{002} \to \underbrace{0002}_4\underbrace{0002}_4\underbrace{002}_3 \\
    002002 & \to \textcolor{red}{00}|002002|\textcolor{red}{002} \to \underbrace{00002}_5\underbrace{002}_3\underbrace{002}_3
\end{align*}
It is also trivial to go in the other direction. If we are given one possible ordered partition of number $11$ into parts $3$, $4$ and $5$, we can just concatenate the blocks whose lengths will correspond to those parts and in the end we just remove the first two and the last three lots. For example
\begin{equation*}
    11 = 3 + 5 + 3 \to \underbrace{002}_{3}\underbrace{00002}_{5}\underbrace{002}_{3} \to \textcolor{red}{00}|200002|\textcolor{red}{002} \to 200002.
\end{equation*}
It is clear from here that there is a bijection between maximal configurations of the two-story Flory model of length $n$ and compositions (ordered partitions) of number $n + 5$ into parts $3$, $4$ and $5$. The sequence that counts the number of ordered partitions of $n$ into parts $3$, $4$ and $5$ can be found on the OEIS under name \href{https://oeis.org/A017818}{A017818}.

Of course, there is nothing special about the two-story Flory model and the same reasoning can be applied to any multi-story Flory model. If we consider $k$-story Flory model, then we get maximal configurations with at least $k$ empty lots and at most $2k$ empty lots between each two occupied lots. Adding $k$ empty lots in front of each maximal configuration and $k$ empty lots and one occupied lot at the end of each configuration we get a string of length $n + 2k + 1$. Splitting this string into blocks containing an occupied lot and all the empty lots to the west of it, we get decomposition of the number $n + 2k + 1$ into parts $k + 1, k + 2, \ldots, 2k + 1$. On the other hand, if we start from a composition of the number $n + 2k + 1$ into parts $k + 1, k + 2, \ldots, 2k + 1$, we can trivially reconstruct the corresponding maximal configuration. Hence, we have the following proposition. 
\begin{proposition}
    The number of maximal configurations with fixed length $n \in \mathbb{N}$ in the $k$-story Flory model ($k \in \mathbb{N}$) is equal to the number of compositions (ordered partitions) of number $n + 2k + 1$ into parts $k + 1, k + 2, \ldots, 2k + 1$.
\end{proposition}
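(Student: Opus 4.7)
The plan is to replicate the explicit bijection the authors have just worked out for $k=2$, with the parameters generalised. The only substantive content is to characterise maximality in terms of spacings: in the $k$-story Flory model, I would show that a configuration is maximal if and only if consecutive houses are separated by between $k$ and $2k$ empty lots, and the first (respectively last) house is preceded (respectively followed) by between $0$ and $k$ empty lots. The lower bound of $k$ between two houses comes from permissibility: using the formula ``blocks up to $k-|i-j|+1$ stories'', two $k$-story houses at positions $i<j$ with $j-i\le k$ obstruct the lowest stories of each other. The upper bound of $2k$ comes from maximality: if two houses were separated by $2k+1$ or more empty lots, one could insert another $k$-story house in the middle position, still leaving $\ge k$ empty lots on either side. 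At the boundary, permissibility is automatic because the strip's edge provides sunlight, but $k+1$ or more empty lots at either end would permit insertion of another house using the boundary light. This step is the only place where the $45$-degree sunlight geometry of Section~\ref{sec:multi-story} is used, and is where I expect the main (though routine) case-checking.

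Once these spacing bounds are in hand, I would define the forward map: given a maximal configuration $C$ of length $n$, prepend $k$ empty lots and append $k$ empty lots followed by one extra ``phantom'' $k$-story house. The resulting word has length $n+2k+1$, always ends with a house, and therefore parses uniquely into blocks of the form $0^{\ell-1}h$ (a run of zeros followed by a single house). The spacing characterisation translates directly into $\ell_i\in\{k+1,k+2,\dots,2k+1\}$: the first block has length $k+p_1\in\{k+1,\dots,2k+1\}$, where $p_1\in\{1,\dots,k+1\}$ is the position of the first real house; each interior block has length $1+(\text{gap})\in\{k+1,\dots,2k+1\}$; and the last block combines the trailing zeros of $C$ (between $0$ and $k$ of them) with the appended $k$ zeros and the phantom house, giving length in $\{k+1,\dots,2k+1\}$. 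Thus $(\ell_1,\dots,\ell_m)$ is a composition of $n+2k+1$ into parts from $\{k+1,\dots,2k+1\}$.

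For the inverse, I would take such a composition, concatenate the blocks $0^{\ell_1-1}h\,0^{\ell_2-1}h\cdots 0^{\ell_m-1}h$, and strip the leading $k$ zeros (available because $\ell_1\ge k+1$) and the trailing $k+1$ symbols consisting of $k$ zeros followed by the phantom house (available because $\ell_m\ge k+1$). The resulting word has length $n$; interior gaps between houses are $\ell_i-1\in\{k,\dots,2k\}$ and the leading/trailing zero counts lie in $\{0,\dots,k\}$, so by the characterisation from the first paragraph it is a maximal configuration. The two maps are mutually inverse by construction, which establishes the claimed equinumerosity. All steps beyond the initial constraint check are pure bookkeeping that parallels the $k=2$ worked example preceding the statement.
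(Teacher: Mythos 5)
Your proposal is correct and follows essentially the same route as the paper: the same spacing characterisation (between $k$ and $2k$ empty lots between consecutive houses, at most $k$ at either end), the same padding by $k$ leading zeros and $k$ zeros plus one phantom house at the end, and the same parsing into blocks of the form ``zeros followed by a house'' of lengths in $\{k+1,\dots,2k+1\}$. No substantive difference from the paper's argument.
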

 For $k \in \{3, \ldots, 9\}$ those sequences can be found on the OEIS under the following names
\begin{itemize}
    \item $k = 3$ -- compositions of $n$ into parts $p$ where $4 \le p \le 7$ -- \href{https://oeis.org/A017829}{A017829},
    \item $k = 4$ -- compositions of $n$ into parts $p$ where $5 \le p \le 9$ -- \href{https://oeis.org/A017840}{A017840},
    \item $k = 5$ -- compositions of $n$ into parts $p$ where $6 \le p \le 11$ -- \href{https://oeis.org/A017851}{A017851},
    \item $k = 6$ -- compositions of $n$ into parts $p$ where $7 \le p \le 13$ -- \href{https://oeis.org/A017862}{A017862},
    \item $k = 7$ -- compositions of $n$ into parts $p$ where $8 \le p \le 15$ -- \href{https://oeis.org/A017873}{A017873},
    \item $k = 8$ -- compositions of $n$ into parts $p$ where $9 \le p \le 17$ -- \href{https://oeis.org/A017884}{A017884},
    \item $k = 9$ -- compositions of $n$ into parts $p$ where $10 \le p \le 19$ -- \href{https://oeis.org/A017895}{A017895}.
\end{itemize}
Since maximal configurations of length $n = 1$ are in bijection with the ordered partitions of the number $2k + 2$, we have to look at all the above sequences with the offset of $2k + 1$.

Note that it is straightforward to obtain the recurrence relation for these sequences (and from that recurrence relation also the generating function) regardless of the value of $k$. Hence, even though the sequences for $k \ge 10$ do not appear on the OEIS, we can easily calculate their elements. Let us explain how to get the recurrence relation and the generating function in the case $k = 3$ and then formulate the general result. Denote by $a_n$ the number of compositions of $n$ into parts $4, 5, 6$ and $7$. Clearly, $a_0=1$ since there is one way to get $0$ from parts $4, 5, 6$ and $7$, we do not take any of the parts. For integers smaller than the smallest available part there are zero compositions, hence $a_1 = a_2 = a_3 = 0$. Since available parts are integers between $k + 1$ and $2k + 1$, they can be composed in only one way, therefore $a_4 = a_5 = a_6 = a_7 = 1$. For any $n \ge 8$ the logic is that we can first compose $n - 4$ from available parts and then just add $4$ and similar for other available parts. Hence, we have the following recurrence relation
\begin{equation*}
    a_n = a_{n - 4} + a_{n - 5} + a_{n - 6} + a_{n - 7}.
\end{equation*}
From this recurrence relation, we clearly have that the generating function of the sequence $a_n$ is given with
\begin{equation*}
    F(x) = \frac{1}{1 - x^4 - x^5 - x^6 - x^7}.
\end{equation*}
It is clear now that we can follow the same logic in the general case with $k$-story houses.
\begin{proposition}
    Fix $k \in \mathbb{N}$. Let $(a_n)_{n \ge 0}$ be the integer sequence where
    \begin{equation*}
        a_n = \textnormal{the number of compositions of $n$ into parts $k + 1, k + 2, \ldots, 2k + 1$}.
    \end{equation*}
    Then for every $n \ge 2k + 2$ the sequence $(a_n)_{n \ge 0}$ satisfies the recurrence relation
    \begin{equation*}
        a_n = \sum_{i = k + 1}^{2k + 1} a_{n - i},
    \end{equation*}
    with initial conditions
    \begin{align*}
        a_0 & = 1, \\
        a_i & = 0, \quad 1 \le i \le k \\
        a_i & = 1, \quad k + 1 \le i \le 2k+1.
    \end{align*}
    Furthermore, the generating function $F(x)$ of the sequence $(a_n)_{n \ge 0}$ is given with
    \begin{equation*}
        F(x) = \frac{1}{1 - \sum_{i = k + 1}^{2k + 1} x^i}.
    \end{equation*}
\end{proposition}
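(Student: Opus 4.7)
The statement is a routine structural result about compositions with restricted parts, so the plan is organized around three standard steps: initial conditions, recurrence, and generating function.

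First, I would verify the initial conditions directly from the definition. For $n=0$, the empty composition is the unique way to obtain $0$ from any set of positive parts, so $a_0=1$. For $1 \le i \le k$, every allowed part is strictly larger than $i$, so no composition exists and $a_i=0$. For $k+1\le i\le 2k+1$, the value $i$ itself is an allowed part; moreover it is the \emph{only} composition, because using two or more parts would force a sum of at least $2(k+1)>2k+1\ge i$. This pins down all the initial values.

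Next, I would derive the recurrence by the standard first-part decomposition. For $n\ge 2k+2$, any composition of $n$ with parts in $\{k+1,\dots,2k+1\}$ is determined by its first part $j\in\{k+1,\dots,2k+1\}$ and the composition of $n-j$ that remains. Conversely, prepending any allowed $j$ to a composition of $n-j$ gives a composition of $n$. The hypothesis $n\ge 2k+2$ guarantees that $n-j\ge 1$ for every allowed $j$, so the remaining partial sums are non-negative integers for which $a_{n-j}$ is already defined and counts the correct objects. Summing over $j$ yields
\begin{equation*}
a_n \;=\; \sum_{j=k+1}^{2k+1} a_{n-j}.
\end{equation*}

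Finally, I would pass to the generating function $F(x)=\sum_{n\ge 0}a_n x^n$. The quickest route is to multiply the recurrence by $x^n$ and sum for $n\ge 2k+2$. The left-hand side becomes $F(x)$ minus a polynomial of degree at most $2k+1$; each right-hand term $\sum_{n\ge 2k+2} a_{n-j}x^n = x^j\bigl(F(x)-\sum_{n<2k+2-j}a_n x^n\bigr)$ contributes $x^j F(x)$ plus correction terms. A short bookkeeping check, using the explicit initial values above, shows that all correction polynomials cancel and one obtains
\begin{equation*}
F(x)\,\Bigl(1-\sum_{i=k+1}^{2k+1}x^i\Bigr)=1,
\end{equation*}
hence the claimed formula. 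Alternatively, one can invoke the classical symbolic-method identity that the generating function for compositions with parts in a set $S\subset\mathbb{N}$ is $1/(1-\sum_{i\in S}x^i)$, which is exactly the SEQ construction applied to the atomic class $\sum_{i\in S}x^i$; this bypasses the bookkeeping entirely.

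The only mildly delicate point, and the one I would double-check carefully, is the matching between the range $n\ge 2k+2$ in the recurrence and the polynomial corrections in the generating-function calculation, to make sure no boundary term is mis-indexed; everything else is automatic.
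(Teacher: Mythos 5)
Your proposal is correct and follows essentially the same route as the paper: direct verification of the initial values, a first-part (in the paper, last-part) decomposition for the recurrence, and the standard translation of the recurrence into the rational generating function. The paper presents this only informally via the $k=3$ example before stating the general result, so your more careful bookkeeping of the boundary terms in the generating-function step is, if anything, a slight improvement in rigor rather than a different method.
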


The situation with maximal configurations with fixed number of houses in the $k$-story Flory model is even simpler. Let us again describe what happens in the case $k = 2$ and then formulate the general result. To find all the maximal configurations with exactly $3$ houses, we start with the maximal configuration $2002002$ which has $3$ houses. Now we can obtain all the maximal configurations with exactly $3$ houses from this one in the following way: for each house, we choose whether we want to put $0$, $1$ or $2$ additional empty lots to the west of it and for the last house, we also choose whether we want to put $0$, $1$ or $2$ additional empty lots to the east of it. Since we have $3$ houses, we need to make $4$ decisions and we have $3$ options in each of those $4$ decisions. Hence, to each ordered quadruple of elements $0$, $1$ and $2$, we can assign a unique maximal configuration with precisely $3$ houses. For example
\begin{align*}
    (1, 2, 2, 1) & \to \textcolor{red}{0}|200|\textcolor{red}{00}|200|\textcolor{red}{00}|2|\textcolor{red}{0}, \\
    (0, 1, 2, 1) & \to 200|\textcolor{red}{0}|200|\textcolor{red}{00}|2|\textcolor{red}{0}, \\
    (1, 1, 2, 0) & \to \textcolor{red}{0}|200|\textcolor{red}{0}|200|\textcolor{red}{00}|2, \\
    (0, 0, 0, 0) & \to 2002002.
\end{align*}
Since we start with $2$ empty lots between each two houses, by adding $0$, $1$ or $2$ more empty lots, we end up with $2$, $3$ or $4$ empty lots between each two houses which still guarantees maximality and we obviously cannot violate permissibility with adding empty lots. Also, since we started from a configuration that has houses on the easternmost and on the westernmost lots, adding $0$, $1$ or $2$ additional empty lots before the first house or after the last house, will not violate maximality. On the other hand, if we are given a maximal configuration in the two-story Flory model with precisely $3$ houses, we can easily obtain the corresponding ordered quadruple of elements $0$, $1$ and $2$. Each maximal configuration has to have $0$, $1$ or $2$ empty lots before the first house and after the last one and it has to have at least $2$ and at most $4$ empty lots between each two houses. First element of the ordered quadruple corresponds to the number of empty lots in front of the first house. Counting the number of empty lots between each two houses and subtracting $2$ gives as all the other elements of our ordered quadruple, except for the last one. The last one corresponds to the number of empty lots after the last house. For example
\begin{equation*}
    02000200200 \to \underbrace{0}_{1}2\underbrace{000}_{3}2\underbrace{00}_{2}2\underbrace{00}_{2} \to (1, 3-2, 2-2, 2) = (1, 1, 0, 2).
\end{equation*}
Hence, the total number of maximal configurations with precisely $3$ houses in the two-story Flory model is $3^4$. Again, there is nothing special about the two-story Flory model and the same logic can be applied to any $k$-story Flory model. If we want to count the number of maximal configurations with precisely $n$ houses in the $k$-story Flory model, we start with the configuration
\begin{equation*}
    \overbrace{k\underbrace{00\ldots00}_{k\textnormal{ zeros}}k\underbrace{00\ldots00}_{k\textnormal{ zeros}}k0\ldots0k}^{n \textnormal{ houses}}
\end{equation*}
and then we choose ordered $(n+1)$-tuple of elements $0, 1, \ldots, k$. Denote this $(n+1)$-tuple with $(k_0, k_1, \ldots, k_n)$. To this $(n+1)$-tuple we assign the maximal configuration
\begin{equation*}
    \overbrace{\underbrace{00\ldots00}_{k_0 \textnormal{ zeros}}k\underbrace{00\ldots00}_{k\textnormal{ zeros}}\underbrace{00\ldots00}_{k_1 \textnormal{ zeros}}k\underbrace{00\ldots00}_{k\textnormal{ zeros}}\underbrace{00\ldots00}_{k_2 \textnormal{ zeros}}k0\ldots0\underbrace{00\ldots00}_{k_{n - 1} \textnormal{ zeros}}k\underbrace{00\ldots00}_{k_n \textnormal{ zeros}}}^{n \textnormal{ houses}}
\end{equation*}
In this way we obtain a configuration where in front of the first house and after the last house we have at most $k$ empty lots, and between each two houses we have between $k$ and $2k$ empty lots. Such configurations are obviously maximal. On the other hand, if we start from a maximal configuration with precisely $n$ houses, we can trivially reconstruct the corresponding $(n+1)$-tuple of elements $0, 1, \ldots, k$.

\begin{proposition}
    The number of maximal configurations with fixed number of houses $n \in \mathbb{N}$ in the $k$-story Flory model ($k \in \mathbb{N}$) is equal to the the number of ordered $(n+1)$-tuples of elements $0, 1, \ldots, k$ which is equal to $(k + 1)^{n + 1}$.
\end{proposition}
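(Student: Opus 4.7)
The plan is to establish the stated equality by constructing an explicit bijection between the set of maximal configurations of the $k$-story Flory model containing exactly $n$ houses and the set $\{0,1,\ldots,k\}^{n+1}$. The outline is essentially already prepared by the example for $k=2,\ n=3$ given just before the proposition; I would promote that informal description to a rigorous argument.

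The first step is to characterize the geometry of any maximal configuration. Because a $k$-story house blocks sunlight up to height $k-|i-j|+1$ at distance $|i-j|$, the AND-type permissibility forces each pair of consecutive houses to be separated by \emph{at least} $k$ empty lots, so that the $k$-th story of each sees the sun from the side facing its neighbour. Dually, maximality forces each such gap to contain \emph{at most} $2k$ empty lots: otherwise one could insert an additional $k$-story house in the middle of the gap without casting a new shadow on anyone. For the leftmost and rightmost gaps (before the first house and after the last) there is no permissibility constraint (the boundary supplies unobstructed sunlight), but maximality forces them to contain at most $k$ empty lots, since $k+1$ empty lots at the boundary would allow one more $k$-story house to be placed at distance $k$ from the terminal house. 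Thus every maximal configuration with $n$ houses decomposes uniquely as
\begin{equation*}
\underbrace{0\cdots 0}_{k_0}\,k\,\underbrace{0\cdots 0}_{k+k_1}\,k\,\underbrace{0\cdots 0}_{k+k_2}\,k\,\cdots\,k\,\underbrace{0\cdots 0}_{k+k_{n-1}}\,k\,\underbrace{0\cdots 0}_{k_n},
\end{equation*}
with $(k_0,\ldots,k_n)\in\{0,1,\ldots,k\}^{n+1}$.

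The second step is to verify that the map $(k_0,\ldots,k_n)\mapsto C$ defined by the display above is a bijection onto the set of maximal configurations with $n$ houses. Injectivity is immediate from the decomposition. Surjectivity follows from the characterization in the first step: every maximal configuration has gaps of the prescribed sizes, hence is the image of the tuple obtained by recording the boundary gap sizes and the interior gap sizes minus $k$. Well-definedness in the forward direction requires checking that the produced configuration is indeed permissible (the interior gaps have at least $k$ zeros by construction) and maximal (no position admits the insertion of a $k$-story house: interior gaps have at most $2k$ zeros, and boundary gaps at most $k$). Both checks reduce to the same distance-versus-height comparison used in the first step. Counting $(n+1)$-tuples in $\{0,\ldots,k\}^{n+1}$ then yields $(k+1)^{n+1}$.

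There is no substantial obstacle; the only care needed is in the boundary case, where one must treat the configuration consisting of $n=1$ single house and the degenerate situation of several-houses-touching-the-boundary without double counting. Because the tuple $(k_0,\ldots,k_n)$ has exactly $n+1$ coordinates, each with $k+1$ admissible values, no such double counting arises, and the bijective proof is complete.
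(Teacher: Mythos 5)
Your proposal is correct and follows essentially the same route as the paper: both decompose a maximal configuration by its gaps (between $k$ and $2k$ empty lots between consecutive houses, at most $k$ at either boundary) and biject with $(k_0,\ldots,k_n)\in\{0,\ldots,k\}^{n+1}$ via interior gaps of size $k+k_i$ and boundary gaps of size $k_0$, $k_n$. Your write-up merely makes explicit the permissibility/maximality justifications for the gap bounds that the paper states more informally.
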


\subsubsection{Mixed Flory model}
Just as in the case of the mixed Riviera model, mixed Flory model refers to the model where one-story and two-story houses can be built on the same $1 \times n$ tract of land. For this model we again obtain trivariate generating function $F(x, y, z)$, where $x$ is a formal variable associated with the number of one-story houses in the configuration, $y$ is a formal variable associated with the number of two-story houses in the configuration and $z$ is a formal variable associated with the length of the configuration. The explicit form of this trivariate generating function was obtained using R and Maxima and it is given in Appendix (see \eqref{it:mixed_flory}). By expanding $F(x ,y ,z)$ into formal power series in powers of $z$, we get
\begin{align*}
    F(x, y, z) & = 1 \\
    & + yz \\
    & + 2yz^2 \\
    & + (3y + x^2)z^3 \\
    & + (y^2 + 2y + 2x^2)z^4 \\
    & + (3y^2 + y + x^3 + x^2)z^5 \\
    & + (6y^2 + 2x^2y + 2x^3)z^6 \\
    & + (y^3 + 7y^2 + 6x^2y + x^4 + x^3)z^7 \\
    & + (4y^3 + 6y^2 + (2x^3 + 8x^2)y + 3x^4)z^8 \\
    & + (10y^3 + (3x^2 + 3)y^2 + (6x^3 + 6x^2)y + x^5 + 3x^4)z^9 \\
    & + (y^4 + 16y^3 + (12x^2 + 1)y^2 + (2x^4 + 8x^3 + 2x^2)y + 4x^5 + x^4)z^{10} + \dots
\end{align*}
The first several elements of the sequence that counts the number of different maximal configurations of length $n$ are $1, 2, 4, 5, 6, 10, 16, 23, 32, 47, \dots$ This sequence does not appear on the OEIS.

\section{One-story model on an \texorpdfstring{$m\times n$}{mxn} grid for \texorpdfstring{$m=2,3$}{m=2,3} and \texorpdfstring{$n\in\bbN$}{n∈N}}\label{sec:mxnForSmallm}

In this section we come back to the original settlement model introduced in \cite{PSZ-21} in which we have a rectangular $m \times n$ tract of land where $m > 1$ and the sun can come from the east, south and west. The question that we were interested in is what is the total number of all maximal configurations for all grid sizes $m \times n$. This is exactly the question posed in \cite[Question 2.2]{PSZ-21-2}. We give a partial answer to this question when the considered tract of land is narrow. As already announced in the introduction, we were unable to obtain a closed-form formula for all grid sizes, but for grids of size $2\times n$ and $3\times n$ we derive the (bivariate) generating functions counting the number of maximal configurations. The method we use is again the transfer matrix method which could be similarly adopted for bigger $m$'s, but the calculations soon get increasingly infeasible. In this section we only consider one-story houses and we do not look at the Flory's version of the problem where each house would need to get sunlight from east, south and west. Of course, those models could be studied with the same method.

Recall that every configuration on an $m \times n$ tract of land can be encoded as a $m \times n$ matrix $C$, where $c_{i,j} = 1$ if the lot $(i, j)$ is occupied and $c_{i,j} = 0$ otherwise. We now treat all the possible different columns of such matrices as letters in our alphabet. Notice that those are just binary numbers of length $m$ written as column vectors. Once we interpret each column of such a matrix as a letter, every configuration can be viewed as a word of length $n$. As already explained in the introduction, the transfer matrix method can be applied if the properties of permissibility and maximality of a configuration can be verified by inspecting only finite size patches of a given configuration. Luckily, in our model, the properties of maximality and permissibility are locally verifiable, i.e.\ if one wants to check whether a state of a certain lot (occupied or unoccupied) has caused the configuration to be impermissible or not maximal, one only needs to check the situation on the lots in a finite radius of the observed lot. These verifications whether some configuration is permissible and maximal are done directly on the matrix $C$, but as soon as we find a forbidden pattern, we forbid the corresponding word. More precisely, to verify that a configuration is permissible, one needs to check that no occupied lot $(i, j)$ borders simultaneously with tree other occupied lots to its east, south and west. This can be done by inspecting whether the forbidden pattern shown in Figure \ref{fig:forbidden_pattern} appears anywhere in the matrix $C$.
\begin{figure}
	\begin{tikzpicture}[scale = 0.8]
	\filldraw [fill=white, draw=black] (0,0) rectangle (1,1);
	\filldraw [fill=white, draw=black] (1,0) rectangle (2,1);
	\filldraw [fill=white, draw=black] (2,0) rectangle (3,1);
	\filldraw [fill=white, draw=black] (1,-1) rectangle (2,0);
	\node[] at (0.5,0.5) {$1$};
	\node[] at (1.5,0.5) {$1$};
	\node[] at (2.5,0.5) {$1$};
	\node[] at (1.5,-0.5) {$1$};
	\end{tikzpicture}
	\caption{The forbidden pattern.}\label{fig:forbidden_pattern}
\end{figure}
To verify that a configuration is maximal, one needs to additionally check that no unoccupied lots can be built on. Since it holds that
\begin{equation*}
	c_{i, j} =0 \Longrightarrow  \begin{array}{cc}
	(c_{i, j - 2} = 1 \mbox{ and } c_{i, j - 1} = 1 \mbox{ and } c_{i + 1, j - 1} = 1) \mbox{ or } \\
	(c_{i, j + 1} = 1 \mbox{ and } c_{i, j + 2} = 1 \mbox{ and } c_{i + 1, j + 1} = 1) \mbox{ or } \\
	(c_{i - 1, j - 1} = 1 \mbox{ and } c_{i - 1, j} = 1 \mbox{ and } c_{i - 1, j + 1} = 1) \mbox{ or } \\
	(c_{i, j - 1} = 1 \mbox{ and } c_{i, j + 1} = 1 \mbox{ and } c_{i + 1, j} = 1),
	\end{array}
\end{equation*}
this can be done by inspecting all the lots surrounding the lot $(i, j)$ that appear in the above expression. Those are precisely the shaded lots in Figure \ref{fig:maximality_check}.

\begin{figure}
	\begin{tikzpicture}[scale = 0.8]
	\filldraw [fill=blue!40!white, draw=black] (0,0) rectangle (1,1);
	\filldraw [fill=blue!40!white, draw=black] (1,0) rectangle (2,1);
	\filldraw [fill=white, draw=black] (2,0) rectangle (3,1);
	\filldraw [fill=blue!40!white, draw=black] (3,0) rectangle (4,1);
	\filldraw [fill=blue!40!white, draw=black] (4,0) rectangle (5,1);
	\filldraw [fill=blue!40!white, draw=black] (1,-1) rectangle (2,0);
	\filldraw [fill=blue!40!white, draw=black] (2,-1) rectangle (3,0);
	\filldraw [fill=blue!40!white, draw=black] (3,-1) rectangle (4,0);
	\filldraw [fill=blue!40!white, draw=black] (1,1) rectangle (2,2);
	\filldraw [fill=blue!40!white, draw=black] (2,1) rectangle (3,2);
	\filldraw [fill=blue!40!white, draw=black] (3,1) rectangle (4,2);
	\node[] at (2.5,0.5) {\small $i, j$};
	\end{tikzpicture}
	\caption{Lots that need to be checked to verify maximality of a configuration.}\label{fig:maximality_check}
\end{figure}

Therefore, the transfer matrix method can be applied in a completely analogous way as before, but as the number of rows grows, the alphabet becomes bigger so we only consider the cases $m = 2$ and $m = 3$.

\subsection{\texorpdfstring{$2 \times n$}{2xn}}

In this subsection, we derive the bivariate generating function from which we can extract the sequence that counts the total number of different maximal configurations of fixed length $n$ (and width $2$) and the sequence that counts the total number of different maximal configurations with precisely $k$ houses on all the $2 \times n$ grids ($n \in \mathbb{N}$).
One example of a maximal configuration on a $2 \times 6$ tract of land is
\begin{equation*}
    \begin{bmatrix}
        1 & 1 & 0 & 1 & 1 & 1 \\
        1 & 1 & 1 & 1 & 0 & 1
    \end{bmatrix}.
\end{equation*}
Since we can apply the transfer matrix method, we can again use Algorithm \ref{alg:cap} to construct the digraph with which we can encode all the maximal configurations. Once we have the digraph, we can define the matrix function $A(x)$. This matrix is obtained from the adjacency matrix of the corresponding digraph, but instead of putting $1$ on the position $(i, j)$ when the transition from node $i$ to node $j$ is possible, we put $x$ if the transition from node $i$ to node $j$ adds one house to the configuration and we put $x^2$ if the transition from node $i$ to node $j$ adds two houses to the configuration (notice that it is impossible that the transition from node $i$ to node $j$ adds zero houses to the configuration because the word $(0, 0)^T$ never appears since it would cause that the resulting configuration is not maximal). As before, the bivariate generating function is obtained using R and Maxima and the precise shape can be found in the Appendix (see \eqref{it:2xn}). The first few coefficients in the expansion of $F(x, y)$ are given in Table \ref{tab:gf3}. By inspecting the non-zero coefficients in the table, we see that the ratio $\frac{k}{n}$ is in-between $1$ and $\frac{5}{3}$, for large $n$.

\begin{table}[!h]
	\caption{The first few coefficients in the expansion of the bivariate generating function $F(x,y)$.}\label{tab:gf3}
	\small
	\begin{tabular}{c|cccccccccccccccccc}
		$n\backslash k$ & $1$ & $x$ & $x^2$ & $x^3$ & $x^4$ & $x^5$ & $x^6$ & $x^7$ & $x^8$ & $x^9$ & $x^{10}$ & $x^{11}$ & $x^{12}$ & $x^{13}$ & $x^{14}$ & $x^{15}$ & $x^{16}$ & $x^{17}$\\\hline
		$1$      & $1$ &     &     &     &     &     &     &     &      &      &      &      &      &      &      &      &      &      \\
		$y$      &     &     & $1$ &     &     &     &     &     &      &      &      &      &      &      &      &      &      &      \\
		$y^2$    &     &     &     &     & $1$ &     &     &     &      &      &      &      &      &      &      &      &      &      \\
		$y^3$    &     &     &     &     &     & $4$ &     &     &      &      &      &      &      &      &      &      &      &      \\
		$y^4$    &     &     &     &     &     &     & $4$ & $2$ &      &      &      &      &      &      &      &      &      &      \\
		$y^5$    &     &     &     &     &     &     &     & $4$ & $5$  & $1$  &      &      &      &      &      &      &      &      \\
		$y^6$    &     &     &     &     &     &     &     &     & $4$  & $4$  & $8$  &      &      &      &      &      &      &      \\
		$y^7$    &     &     &     &     &     &     &     &     &      & $4$  & $4$  & $18$ & $3$  &      &      &      &      &      \\
		$y^8$    &     &     &     &     &     &     &     &     &      &      & $4$  & $4$  & $25$ & $16$ & $1$  &      &      &      \\
		$y^9$    &     &     &     &     &     &     &     &     &      &      &      & $4$  & $4$  & $33$ & $31$ & $13$ &      &      \\
		$y^{10}$ &     &     &     &     &     &     &     &     &      &      &      &      & $4$  & $4$  & $41$ & $42$ & $50$ &  $4$ \\
	\end{tabular}
\end{table}

Notice that there are $16$ maximal configurations on $2 \times 6$ tract of land and $8$ of those have $10$ houses. This means that in addition to our example, there are $15$ more maximal configurations on $2 \times 6$ tract of land and $7$ more on that tract of land that have precisely $10$ houses. The first several elements of the integer sequence that counts the number of maximal configurations of fixed length $n$ (and width $2$) can be read as row sums: $1, 1, 4, 6, 10, 16, 29, 50, 85, 145, \dots$ and this sequence is still not included in the OEIS. On the other hand, by expanding $F(x, y)$ into the formal power series in powers of $x$ (i.e.\ taking column sums), we can see that, beside the mentioned maximal configuration with precisely $10$ houses, there are $15$ more maximal configurations with the same number of houses. The first several elements of the integer sequence that counts the number of maximal configurations with precisely $k$ houses on all the $2 \times n$ grids ($n \in \mathbb{N}$) can be read as column sums: $0, 1, 0, 1, 4, 4, 6, 9, 9, 16, \dots$ This sequence is not found on the OEIS.

Here again we compute the expected number of occupied lots in a maximal 
$2 \times n$ configuration and the expected length $n$ of a maximal $2 \times n$
configuration with $k$ houses. The obtained values are 
$$\langle k_{2,n}(n) \rangle = 1.437496 \,\,n \quad {\rm and} \quad
\langle n_{2,n}(k) \rangle = 0.724696 \,\,k.$$

\subsection{\texorpdfstring{$3 \times n$}{3xn}}

In this subsection, we derive the bivariate generating function from which we can extract the sequence that counts the total number of different maximal configurations of fixed length $n$ (and width $3$) and the sequence that counts the total number of different maximal configurations with precisely $k$ houses on all the $3 \times n$ grids ($n \in \mathbb{N}$).
One example of a maximal configuration on a $3 \times 5$ tract of land is
\begin{equation*}
    \begin{bmatrix}
        0 & 1 & 1 & 0 & 1 \\
        1 & 1 & 1 & 1 & 1 \\
        1 & 0 & 0 & 0 & 1
    \end{bmatrix}.
\end{equation*}
Similarly as in the case $2\times n$ we obtain the bivariate generating function (see \eqref{it:3xn}). The first few coefficients in the expansion of $F(x, y)$ are given in Table \ref{tab:gf4}. By inspecting the non-zero coefficients in the table, we see that the ratio $\frac{k}{n}$ is in-between $\frac{3}{2}$ and $\frac{7}{3}$, for large $n$.

\begin{table}[!h]
	\caption{The first few coefficients in the expansion of the bivariate generating function $F(x,y)$.}\label{tab:gf4}
	\small
	\begin{tabular}{c|cccccccccccccccccc}
		$n\backslash k$ & $1$ & $x$ & $x^2$ & $x^3$ & $x^4$ & $x^5$ & $x^6$ & $x^7$ & $x^8$ & $x^9$ & $x^{10}$ & $x^{11}$ & $x^{12}$ & $x^{13}$ & $x^{14}$ & $x^{15}$ & $x^{16}$ & $x^{17}$\\\hline
		$1$      & $1$ &     &     &     &     &     &     &     &      &      &      &      &      &      &      &      &      &      \\
		$y$      &     &     &     & $1$ &     &     &     &     &      &      &      &      &      &      &      &      &      &      \\
		$y^2$    &     &     &     &     &     &     & $1$ &     &      &      &      &      &      &      &      &      &      &      \\
		$y^3$    &     &     &     &     &     &     &     & $9$ & $1$  &      &      &      &      &      &      &      &      &      \\
		$y^4$    &     &     &     &     &     &     &     &     & $4$  & $8$  & $7$  &      &      &      &      &      &      &      \\
		$y^5$    &     &     &     &     &     &     &     &     &      &      & $12$ & $8$  & $20$ & $1$  &      &      &      &      \\
		$y^6$    &     &     &     &     &     &     &     &     &      &      &      &      & $24$ & $12$ & $65$ & $4$  &      &      \\
		$y^7$    &     &     &     &     &     &     &     &     &      &      &      &      &      & $24$ & $12$ & $84$ & $122$& $27$ \\
		$y^8$    &     &     &     &     &     &     &     &     &      &      &      &      &      &      & $4$  & $40$ & $40$ &$228$ \\
		$y^9$    &     &     &     &     &     &     &     &     &      &      &      &      &      &      &      &      & $24$ & $44$ \\
	\end{tabular}
\end{table}

Notice that there are $41$ maximal configurations on $3 \times 5$ tract of land and $12$ of those have $10$ houses. This means that in addition to our example, there are $40$ more maximal configurations on $3 \times 5$ tract of land and $11$ more on that tract of land that have precisely $10$ houses. The first several elements of the integer sequence that counts the number of maximal configurations of fixed length $n$ (and width $3$) can be read as row sums: $1, 1, 10, 19, 41, 105, 269, 651, 1560, 3861, \dots$ and this sequence is still not included in the OEIS. On the other hand, by expanding $F(x, y)$ into the formal power series in powers of $x$ (i.e.\ taking column sums), we can see that beside the mentioned maximal configuration with precisely $10$ houses, there are $18$ more maximal configurations with the same number of houses. The first several elements of the integer sequence that counts the number of maximal configurations with precisely $k$ houses on all the $3 \times n$ grids ($n \in \mathbb{N}$) can be read as column sums: $0, 0, 1, 0, 0, 1, 9, 5, 8, 19$. This sequence cannot be found on the OEIS.


In this case, the expected number of occupied lots in a maximal 
$3 \times n$ configuration and the expected length $n$ of a maximal $3 \times n$
configuration with $k$ houses are given as
$$\langle k_{3,n}(n) \rangle = 2.071886 \,\,n \quad {\rm and} \quad
\langle n_{3,n}(k) \rangle = 0.503345 \,\,k.$$
Since the largest possible number of occupied lots is, roughly,
of the order $5n/3$ in the $2 \times n$ strip and $7n/3$ in the $3 \times n$
strip for large $n$, the respective efficiencies are given by 
$$ \varepsilon _{2 \times n} =  0.862498 \quad {\rm and } \quad
\varepsilon _{3 \times n} = 0.887951.$$
One can see that the efficiency of $2 \times n$ strip is lower than for the
Riviera model. This low value is easily explained by the effects
of the fully built lower row. As expected, its effects decrease with the
increasing strip width.

\section{Concluding remarks}\label{sec:concluding}

In this paper we have considered a one-dimensional toy-model of a settlement
planning problem introduced recently by three of the present authors.
In particular, we studied maximal configurations of buildings in a narrow
strip of land oriented east--west subject to the condition that each
building must receive sunlight from either east or west or from both sides.
We have formulated the problem of enumerating such maximal configurations
as a problem of counting binary words of a given
length satisfying certain additional conditions on the allowed patterns. 
By reducing the new problem to counting certain types of walks on a small
directed graph with only six vertices we were able to employ the transfer
matrix method which
yielded generating functions for the enumerating sequences, and hence also
their asymptotics. Along the way we discovered that our maximal configurations
are equinumerous with certain type of restricted permutations and provided
a combinatorial proof of this fact by explicitly constructing a bijection
between two sets.

We have also considered some generalizations of the original problem, such
as allowing the buildings to have more than one floor and varying the type
of restrictions on the sunlight direction. The methods developed
on the toy model were successfully adapted to the more complex settings and
allowed us to obtain multivariate generating functions for the corresponding
enumerating sequences. We have also obtained generating functions for the
sequences enumerating maximal configurations on strips of width 2 and 3,
in most cases obtaining sequences not (yet) in the OEIS.

There are other ways, not explored here, to formulate the original problem
and hence to extend our results. For example, one could consider our
problem ``in negative'' and consider unoccupied places instead of occupied
ones. In that case, the unoccupied places in a maximal configuration must
form a dominating set. For our Riviera model, the maximality condition implies
that such a dominating set would induce a graph of maximum degree one, hence
a dissociation. (Dissociations interpolate between matchings and independent
sets; see \cite{Bock1,Bock2} for definition and some basic properties.)
We are not aware of any results on such dominating sets. 
However, closely related (and less general) independent dominating sets
are being intensely studied, along with other types of domination in graphs.

Our problem could be also formulated and studied on other types of lattices.
Some of them, say the hexagonal one, could be better suited to modeling
real planning applications. On the other hand, the triangular lattice 
might prove more tractable and might lead to closed-form results for the
considered quantities.

Another problem, closely related to the present one, is to study temporal
evolution of built configurations subject to given rules. It is known that
the jamming density of the static and dynamic variant of the Flory model
is not the same; in the static cases, all configurations are considered to
be equally likely, while in the dynamic case some of them are less likely 
to evolve than some others. It would be interesting to explore how additional
restrictions (with respect to the Flory model) affect the difference.

Finally, it would be interesting to model the evolution of configurations
in terms of antagonistic games. It seems plausible that the interest of
a developer is not aligned with the interests of inhabitants -- one would
prefer more buildings, hence more profit, while the others might prefer
more sunshine, hence less profit for the former. Some toy-model simulations
of several variants on dynamics are currently under way.

\section*{Acknowledgments} 
\noindent
We want to express our deep gratitude to Pavel Krapivsky from Boston
University for contacting us and starting an extremely
interesting and fruitful communication that is still ongoing and through
which we learned so much from him as an expert in the field. He was the
one who inspired us to write this particular paper and to consider many
more interesting questions that we are still working on.
Partial support of the Slovenian ARRS (Grant no. J1-3002)
is gratefully acknowledged by T. Do\v{s}li\'c.

\bibliographystyle{bababbrv-fl}
\bibliography{literature}

\appendix

\section{}

Here we give explicit expressions for generating functions related to models discussed in Section \ref{sec:multi-story} and Section \ref{sec:mxnForSmallm}.
\newline
\begin{itemize}
	\item  Bivariate generating function for the two-story Riviera model is given by
	{\tiny \begin{align}\label{it:two-story_Riviera}
		F(x, y) & = -\frac{p(x, y)}{q(x, y)}, \\\nonumber
		p(x, y) & = x^3y^9 + 3x^3y^8 + 3x^3y^7 + (x^3 - x^2)y^6 - 3x^2y^5 - 4x^2y^4 + (x - 3x^2)y^3 - x^2y^2 - xy - 1, \\\nonumber
		q(x, y) & = x^3y^{10} + 2x^3y^9 + x^3y^8 - x^2y^7 - 2x^2y^6 - 2x^2y^5 - x^2y^4 - xy^3 + 1,
		\end{align}}%
	where $x$ is a formal variable associated with the number of two-story houses in the configuration and $y$ is a formal variable associated with the length of the configuration.\\
	\item Trivariate generating function for the mixed Riviera model is given by
	{\tiny \begin{align} \label{it:mixed_Riviera}
		F(x, y, z) & = \frac{p(x, y, z)}{q(x, y, z)}, \\\nonumber
		p(x, y, z) & = x^6y^3z^{17} + ((x^4 - x^5)y^3 + x^6y^2)z^{15} + ((2x^4 - x^5)y^3 - x^6y^2)z^{14} \\\nonumber
		& \qquad + ((-3x^4 - x^3)y^3 - x^5y^2)z^{13} + ((-2x^4 - 3x^3 + 2x^2)y^3 + (x^4 - 2x^5)y^2)z^{12} \\\nonumber
		& \qquad + ((-3x^3 - x^2)y^3 - 4x^4y^2)z^{11} + ((-x^3 - 8x^2)y^3 + x^3y^2)z^{10} \\\nonumber
		& \qquad + ((-4x^2 - 2x + 1)y^3 + (x^3 - x^2)y^2)z^9 + ((2x^3 + x^2)y^2 - 4xy^3)z^8 \\\nonumber
		& \qquad + ((-2x - 2)y^3 + 6x^2y^2)z^7 + ((2x^2 + 4x - 1)y^2 - y^3)z^6 \\\nonumber
		& \qquad + ((6x - 1)y^2 - 2x^2y)z^5 + ((2x + 2)y^2 - x^2y)z^4 \\\nonumber
		& \qquad + (3y^2 - y - x^2)z^3 + y^2z^2 + yz + 1, \\\nonumber
		q(x, y, z) & = x^6y^3z^{17} + ((x^5 + x^4)y^3 + x^6y^2)z^{15} + 4x^4y^3z^{14} + (x^3y^3 + x^5y^2)z^{13} \\\nonumber
		& \qquad + ((3x^3 + 3x^2)y^3 + 2x^4y^2)z^{12} + (4x^2y^3 - x^4y^2)z^{11} + (2xy^3 + x^3y^2)z^{10} \\\nonumber
		& \qquad + ((2x + 1)y^3 + (-x^3 - 2x^2)y^2)z^9 + (y^3 - 3x^2y^2)z^8 + (-x^2 - 2x)y^2z^7 \\\nonumber
		& \qquad + (-2x - 1)y^2z^6  + (-2y^2 - 2x^2y)z^5 - y^2z^4 + (-y - x^2)z^3 + 1,
		\end{align}}%
	where $x$ is a formal variable associated with the number of one-story houses in the configuration, $y$ is a formal variable associated with the number of two-story houses in the configuration and $z$ is a formal variable associated with the length of the configuration.\\
	\item  Trivariate generating function for the mixed Flory model is given by
	{\tiny \begin{align}\label{it:mixed_flory}
		F(x, y, z) & = -\frac{p(x, y, z)}{q(x, y, z)},\\\nonumber
		p(x, y, z) & = x^2yz^8 + x^2yz^7 - xyz^6 - 2xyz^5 + ((1 - 2x)y + 2x^2)z^4 \\\nonumber
		& \qquad + ((2 - x)y + x^2)z^3 + (2y - x)z^2 + yz + 1, \\\nonumber
		q(x, y, z) & = x^2yz^9 - xyz^7 - xyz^6 + ((1 - x)y + x^2)z^5 + yz^4 + yz^3 + xz^2 - 1,
		\end{align}}%
	where $x$ is a formal variable associated with the number of one-story houses in the configuration, $y$ is a formal variable associated with the number of two-story houses in the configuration and $z$ is a formal variable associated with the length of the configuration.\\
	\item  Bivariate generating function for the $2 \times n$ model is given by
	{\tiny \begin{align}\label{it:2xn}
		F(x, y) & = -\frac{p(x, y)}{q(x, y)}, \\\nonumber
		p(x, y) & = x^8y^5 - (x^5 + x^4)y^3 + (2x^3 - x^4)y^2 + (x - x^2)y - 1, \\\nonumber
		q(x, y) & = x^9y^6 - x^6y^4 + (x^4 - x^5)y^3 - x^3y^2 - xy + 1,
		\end{align}
	}%
	where $x$ is a formal variable associated with the number of houses in the configuration and $y$ is a formal variable associated with the length of the configuration.\\
	\item  Bivariate generating function for the $3 \times n$ model is given by
	{\tiny \begin{align}\label{it:3xn}
		F(x, y) & = -\frac{p(x, y)}{q(x, y)}, \\\nonumber
		p(x, y) & = (x^{40} - x^{39})y^{19} + (2x^{38} - x^{37})y^{18} + (-2x^{37} + 3x^{36} - 2x^{35} + x^{34} - x^{33})y^{17} \\\nonumber
		& \qquad + (-5x^{35} + x^{34} + 4x^{33} - x^{32} - x^{31})y^{16} + (-3x^{33} - x^{32} + 5x^{31} - x^{30} - 2x^{29})y^{15} \\\nonumber
		& \qquad + (4x^{31} - 9x^{30} + 12x^{29} - 7x^{28} + x^{26})y^{14} \\\nonumber
		& \qquad + (15x^{29} - 13x^{28} + 4x^{27} - 3x^{26} - 2x^{25} + x^{24})y^{13} + (9x^{27} - 9x^{26} + 3x^{25} + 2x^{22})y^{12} \\\nonumber
		& \qquad + (11x^{24} - 11x^{23} + 12x^{22} - 5x^{21})y^{11} + (-15x^{23} + 34x^{22} - 11x^{21} + 9x^{20} - 7x^{19})y^{10} \\\nonumber
		& \qquad + (-9x^{21} + 30x^{20} - 14x^{19} + 7x^{18} - 9x^{17} + x^{16})y^9 \\\nonumber
		& \qquad + (-4x^{19} + 14x^{18} + 4x^{16} - x^{14} + x^{13})y^8 \\\nonumber
		& \qquad + (5x^{17} - 23x^{16} + 10x^{15} + x^{14} + x^{13} - x^{12})y^7 \\\nonumber
		& \qquad + (3x^{15} - 21x^{14} + 20x^{13} - 2x^{12} + x^{11} - x^{10} - x^9)y^6 \\\nonumber
		& \qquad + (2x^{13} - 15x^{12} + 15x^{11} + 2x^{10})y^5 + (-3x^{10} - 3x^9 + x^8 - x^7 + x^6)y^4 \\\nonumber
		& \qquad + (-x^8 - 5x^7 + x^6 + x^5)y^3 + (-x^6 + x^4 + x^3)y^2 - x^3y - 1, \\\nonumber
		q(x, y) & = 
		(x^{41} - x^{40})y^{20} + (2x^{39} - x^{38})y^{19} + (-2x^{38} + 3x^{37} - 2x^{36})y^{18} \\\nonumber
		& \qquad + (-5x^{36} + x^{35} + 3x^{34} - 2x^{33})y^{17} + (-3x^{34} - x^{33} + 5x^{32} - 2x^{31})y^{16} \\\nonumber
		& \qquad + (4x^{32} - 7x^{31} + 10x^{30} - 4x^{29})y^{15} + (15x^{30} - 8x^{29} + 4x^{28} - x^{26})y^{14} \\\nonumber
		& \qquad + (9x^{28} - 6x^{27} - 2x^{26} + x^{25} - x^{24})y^{13} + (7x^{25} - 11x^{24} + 7x^{23} - 2x^{22})y^{12} \\\nonumber
		& \qquad + (-15x^{24} + 19x^{23} - 12x^{22} + 5x^{21} - 3x^{20})y^{11} + (-9x^{22} + 21x^{21} - 6x^{20} + 6x^{19})y^{10} \\\nonumber
		& \qquad + (-4x^{20} + 14x^{19} + 2x^{18} + 6x^{17} + 3x^{16})y^9 \\\nonumber
		& \qquad  + (5x^{18} - 8x^{17} + 6x^{16} - x^{15} + x^{14} - x^{13})y^8 + (3x^{16} - 12x^{15} + 4x^{14} - x^{13})y^7 \\\nonumber
		& \qquad  + (2x^{14} - 11x^{13} - x^{12} + 3x^{11} + x^{10} + x^9)y^6 + (-8x^{11} - 6x^{10} + x^9)y^5 \\\nonumber
		& \qquad + (-4x^9 - 4x^8 + x^7 - x^6)y^4 + (-3x^7 - x^5)y^3 + (-x^4 - x^3)y^2 + 1,
		\end{align}}%
	where $x$ is a formal variable associated with the number of houses in the configuration and $y$ is a formal variable associated with the length of the configuration.
\end{itemize}

\end{document}